\newcommand{\dist}{\text{dist}}
\DeclareMathOperator{\diam}{diam}
\newtheorem{theorem}{Theorem}
\newtheorem{definition}[theorem]{Definition}
\newtheorem{corollary}[theorem]{Corollary}
\newtheorem{example}[theorem]{Example}
\newtheorem{remark}[theorem]{Remark}
\newcommand{\R}{\mathbb{R}}
\def\Z{{\mathbb Z}}
\def\N{{\mathbb N}}
\newcommand{\B}[1]{\mathbb{#1}}
\def\moverlay{\mathpalette\mov@rlay}
\def\mov@rlay#1#2{\leavevmode\vtop{%
   \baselineskip\z@skip \lineskiplimit-\maxdimen
   \ialign{\hfil$\m@th#1##$\hfil\cr#2\crcr}}}
\newcommand{\charfusion}[3][\mathord]{
    #1{\ifx#1\mathop\vphantom{#2}\fi
        \mathpalette\mov@rlay{#2\cr#3}
      }
    \ifx#1\mathop\expandafter\displaylimits\fi}
\newcommand{\bigcupdot}{\charfusion[\mathop]{\bigcup}{\cdot}}
\begin{document}
\thispagestyle{empty}

\title{Numbers omitting digits in certain base expansions}

\author{Alexia Yavicoli}
\address{Department of Mathematics, The University of British Columbia. 1984 Mathematics Road, Vancouver BC V6T 1Z2, Canada}
\email{yavicoli@math.ubc.ca}

\author{Han Yu}
\address{Mathematics Institute,Zeeman Building,University of Warwick, Coventry CV47AL, United Kingdom}
\email{Han.Yu.2@warwick.ac.uk}

\keywords{bases, thickness, Cantor sets}
\subjclass{MSC 28A78, MSC 28A80, MSC28A12 \and MSC 11B25}

\makeatletter
\providecommand\@dotsep{5}
\makeatother

\begin{abstract}
In \cite{Yu} the author proved that for each integer $k$ there is an implicit number $M > 0$ such that if $b_1, \cdots , b_k$ are multiplicatively independent integers greater than $M$, there are infinitely many integers whose base $b_1, b_2, \cdots , b_k$ expansions all do not have zero digits.
In this paper we don't require the multiplicative independence condition and make the result quantitative, getting an explicit value for $M$. We also obtain a result for the case when the missing digit(s) may not be
zero. Finally, we extend our method to study various missing-digit sets in an algebraic setting.
\end{abstract}

\maketitle

\section{Introduction and main results}
In this paper, we utilise the notion of thickness for sets in $\mathbb{R}^d,d\geq 1$ to prove several results concerning numbers with missing digits in various different possibly algebraic bases. To start, we first consider the easy setting with `usual' missing digit sets with rational integer bases. 
\begin{definition}
We say that a natural number $N$ can be written in basis $b$ without the digit $0$, if there is a $j \in \N_0$ so that
\[N=\sum_{0\leq i\leq j} c_i b^i \text{ where } c_i \in \{1, \cdots, b-1\}.\]
We say that a positive real number $R$ can be written in basis $b$ without the digit $0$, if there is a $j \in \Z$ so that
\[R=\sum_{-\infty < i\leq j} c_i b^i \text{ where } c_i \in \{1, \cdots, b-1\}.\]
\end{definition}

In \cite{Yu} the author proved that for each integer $k$ there is an implicit number $M > 0$ such that if $b_1, \cdots , b_k$ are multiplicatively independent integers greater than $M$, there are infinitely many integers whose base $b_1, b_2, \cdots , b_k$ expansions all do not have zero digits.
In this paper,  we make the result quantitative, getting an explicit value for $M$, where the multiplicative independence hypothesis is not required.

\begin{theorem}\label{thm: main1}
Let $k\geq 2$ be an integer. Let $(b_1, \cdots, b_k)$ be a $k$-tuple of natural numbers. Let $M=M(k)\in \N_{\geq 5}$ be the smallest number satisfying that $g_k(M-2):=\frac{M-2}{\log(M-2)}-k \frac{e 4 (432)^2}{\log (4)}>0$. If $b_i \geq M=M(k)$ for every $1\leq i \leq k$, then there are infinitely many integers whose base $b_1, \cdots, b_k$ expansions all omit the digit $0$.
\end{theorem}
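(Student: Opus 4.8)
The plan is to encode ``omits the digit $0$ in base $b$'' as membership in a self-similar Cantor set of large thickness, and then to force a common integer by intersecting these sets via a quantitative Newhouse-type gap lemma — the threshold $g_k(M-2)>0$ being precisely the hypothesis such a lemma requires. Throughout one has $b_i\ge M\ge 5>3$, so the relevant digit sets are genuine Cantor sets.

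\emph{Step 1 (the Cantor sets and their thickness).} For an integer $b\ge 3$ put $\mathcal{C}_b=\{\sum_{i\ge 1}c_ib^{-i}:c_i\in\{1,\dots,b-1\}\}$, the attractor of $\{x\mapsto (j+x)/b:1\le j\le b-1\}$; its convex hull is $[\tfrac1{b-1},1]$, and an elementary computation with its gaps — at generation $j$ they have length $\tfrac1{b-1}b^{-j}$ and are flanked by pieces of length $\tfrac{b-2}{b-1}b^{-j}$, self-similarly — gives thickness $\tau(\mathcal{C}_b)\ge b-2$. I would work with the truncations $A_i:=b_i^{m_i}\,\mathcal{C}_{b_i}^{[m_i]}$, where $\mathcal{C}_b^{[m]}$ is the union of the surviving $b$-adic intervals of generation $m$; each $A_i$ is a finite union of closed intervals, still of thickness $\ge b_i-2\ge M-2$, and one checks directly that $A_i=\bigcup_{p\in E_{b_i}^{(m_i)}}[p,p+1]$, where $E_b^{(m)}$ is the set of integers with exactly $m$ base-$b$ digits, all nonzero. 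The point of this bookkeeping is the equivalence: for an integer $N$, $[N,N+1]\subseteq\bigcap_iA_i$ if and only if $N\in\bigcap_iE_{b_i}^{(m_i)}$, i.e.\ $N$ omits the digit $0$ in every base $b_i$. Hence it suffices to produce, for infinitely many tuples $(m_1,\dots,m_k)$, an interval of length $\ge 2$ inside $\bigcap_iA_i$.

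\emph{Step 2 (the intersection).} Here I would use the quantitative gap lemma: if $A_1,\dots,A_k\subseteq\R$ are finite unions of closed intervals, each of thickness $\ge\tau$, whose convex hulls overlap with none contained in a gap of another, and if $\frac{\tau}{\log\tau}>k\cdot\frac{4e\,(432)^2}{\log 4}$, then $\bigcap_iA_i$ has nonempty interior — and since each $A_i$ is a union of intervals of length $\ge b_i-1>2$, the interval one obtains can be taken of length $\ge 2$. With $\tau=M-2$ the displayed inequality reads exactly $g_k(M-2)>0$; the thickness hypothesis holds by Step 1; and the ``not nested in a gap'' condition is automatic once the hulls merely overlap, since $\operatorname{conv}(A_i)=[\tfrac{b_i^{m_i}-1}{b_i-1},b_i^{m_i}]$ has relative diameter close to $1$ whereas every gap of every $A_j$ has relative diameter $O(1/b_j)$, far smaller.

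\emph{Step 3 (aligning the scales, and iterating).} What remains — and the one place genuine care is needed in this proof — is to choose $(m_1,\dots,m_k)$, for infinitely many large choices, so that the $b_i^{m_i}$ are pairwise within a bounded factor (say $\le 2$) of one another; this forces all the $\operatorname{conv}(A_i)$ to contain a common interval of comparable length, so Step 2 applies. Taking logarithms, one needs the $m_i\log b_i$ nearly equal. But the line $s\mapsto(s/\log b_1,\dots,s/\log b_k)$ through the origin of $\R^k$ accumulates on $\Z^k$ as $s\to\infty$ — its closure mod $\Z^k$ is a connected subgroup of the torus containing $0$ — so for arbitrarily large $s$ one can round each $s/\log b_i$ to an integer $m_i$ with $|m_i\log b_i-s|<\varepsilon$, whence $b_i^{m_i}\in[e^{s-\varepsilon},e^{s+\varepsilon}]$; a small $\varepsilon$ gives the alignment. (When the $b_i$ are multiplicatively dependent the scales can simply be matched exactly, but the torus argument is uniform.) Each admissible $s$ then yields an integer $N\asymp e^{s}$ omitting $0$ in all $k$ bases, and letting $s\to\infty$ produces infinitely many. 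The main obstacle, of course, lies not here but in the gap lemma of Step 2, whose proof — controlling how thickness degrades under $k$-fold intersection and extracting the $\frac{\tau}{\log\tau}>Ck$ threshold — is the technical heart of the matter.
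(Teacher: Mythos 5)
Your approach is essentially the same as the paper's: missing-digit Cantor sets of thickness $\geq b-2$, alignment of scales via equidistribution on a torus, and the Falconer--Yavicoli thickness-intersection theorem with exactly the numerical threshold encoded by $g_k$. Working with truncated level-$m$ approximations (finite unions of unit intervals) instead of the full Cantor sets, and with a factor-of-$2$ alignment in place of the paper's tighter $(1-\delta)$-alignment plus right-truncation, are both harmless variations.

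There is, however, a genuine flaw in Step~2. You write that ``each $A_i$ is a union of intervals of length $\ge b_i-1>2$, [so] the interval one obtains can be taken of length $\ge 2$,'' but this contradicts your own Step~1, where you correctly compute $A_i=\bigcup_{p\in E_{b_i}^{(m_i)}}[p,p+1]$: the constituent intervals have length exactly $1$, not $b_i-1$. And independently of that slip, no version of the gap lemma you invoke (nor Theorem~6 of Falconer--Yavicoli, which yields positive Hausdorff dimension) delivers an interval of any prescribed length $\ge 2$ in the intersection; it only delivers non-empty interior. So the step ``produce an interval of length $\ge 2$ inside $\bigcap_i A_i$'' is not established.

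The fix is simple and is exactly how the paper closes the argument: you do not need an interval of length $\ge 2$. The gap lemma (via positive Hausdorff dimension, or just non-empty interior) gives a point $x\in\bigcap_i A_i$ that is not an integer. For each $i$, $x\in[p_i,p_i+1]$ with $p_i\in E_{b_i}^{(m_i)}$; since $x\notin\Z$, $p_i=\lfloor x\rfloor$ for every $i$. Hence $N=\lfloor x\rfloor$ already omits the digit $0$ in all $k$ bases, and letting the scale $s\to\infty$ produces infinitely many such $N$. With this repair your proof is correct and matches the paper's argument in structure and constants.
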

\begin{remark}
    For a concrete example, we can take $k=3.$ In this case, we need $M$ such that
    \[
    \frac{M-2}{\log(M-2)}>\frac{2239488e}{\log(4)}>4391245.
    \]
    Thus $M=79904626$ would be enough. It is convenient to find some easier ``gauge" functions to replace $g_k$. For example, for any $\tilde\varepsilon \in (0,1)$, if $k$ is large enough in terms of $\tilde\varepsilon$ then $M\approx k \log^{1+\tilde\varepsilon}(k)$ works.
\end{remark}

We also prove a result for when the missing digit(s) may not be zero. 
\begin{theorem}\label{thm: main2}
Let $k\geq 2$ be an integer, $(b_1, \cdots, b_k)$ be a $k$-tuple of natural numbers and $r\in [\frac{1}{\min_{1\leq i \leq k} b_i},1)$. Let $M_1, \cdots , M_k$ be sets (of missing digits) satisfying:
\begin{itemize}
\item $0 \in M_i \subseteq \{0, \cdots , b_i-2\}$,
\item $M_i$ does not have consecutive numbers,
\item $\{0, \cdots , b_i-1\}\setminus M_i$ is formed by rows of at least $b_i r$ consecutive numbers. 
\end{itemize}
 Let $M=M(k)\in \N_{\geq 4}$ be the smallest number satisfying that \[g_k(r(M-2)):=\frac{r(M-2)}{\log(r(M-2))}-k \frac{e 4 (432)^2}{\log (4)}>0.\] If $b_i \geq M=M(k)$ for every $1\leq i \leq k$, then there are infinitely many integers whose base $b_1, \cdots, b_k$ expansions omit the digits in the sets $M_1, \cdots, M_k$ respectively.

\end{theorem}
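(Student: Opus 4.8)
The plan is to reduce Theorem~\ref{thm: main2} to an intersection statement about Cantor-like sets of reals via the \emph{thickness} machinery, exactly paralleling the (quantitative) proof of Theorem~\ref{thm: main1}. For each base $b_i$ and missing-digit set $M_i$, let $C_i \subseteq [0,1]$ be the set of real numbers whose base-$b_i$ expansion uses only digits in $\{0,\dots,b_i-1\}\setminus M_i$. The hypotheses on $M_i$ are tailored precisely so that $C_i$ is a (non-trivial) self-similar-type Cantor set whose gaps and bridges are controlled: since $M_i$ contains no two consecutive digits, every gap of $C_i$ at the first level is ``short'', while the assumption that the allowed digits come in runs of length at least $b_i r$ means every bridge has length at least $r$ times the enclosing interval. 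From this one reads off a lower bound for the Newhouse thickness $\tau(C_i)$ of the form $\tau(C_i) \gtrsim r b_i$ (up to an absolute constant like the $432$ appearing in the gauge function), uniformly in the structure of $M_i$. This is the step where the three bulleted conditions get used, and it is essentially the same gap-analysis that underlies the digit-$0$ case, just with $b_i$ replaced by $r b_i$ — which is exactly why $g_k$ is evaluated at $r(M-2)$ rather than $M-2$.

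Next I would invoke the higher-dimensional/multi-set thickness intersection theorem (the quantitative version of the tool from \cite{Yu}, stated earlier in the paper in the form feeding into Theorem~\ref{thm: main1}): if $C_1,\dots,C_k$ are compact subsets of $\R$ with $\prod_{i} \tau(C_i)$ large enough relative to $k$ — concretely, large enough that the gauge inequality $g_k(\,\cdot\,)>0$ holds — then not only is $\bigcap_i (C_i + t_i)$ nonempty for suitable translates, but one gets a quantitative, scalable nonemptiness that can be iterated. The key point is that the condition $b_i \ge M$ with $M$ chosen minimal so that $g_k(r(M-2))>0$ translates directly into the product of thicknesses clearing the threshold demanded by the intersection theorem. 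Here the role of $r \ge 1/\min_i b_i$ is to guarantee $r b_i \ge 1$ so that $C_i$ is genuinely a Cantor set and the bound $\tau(C_i)\gtrsim rb_i$ is meaningful; and $M \ge 4$ ensures $r(M-2) > 1$ so $\log(r(M-2))>0$ and the gauge function is well-defined.

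Finally I would upgrade the ``one common point'' statement to ``infinitely many integers''. The standard device: common points of the rescaled/translated Cantor sets correspond to real numbers lying in all the missing-digit sets simultaneously at all sufficiently fine scales; by a self-similarity (the missing-digit sets are invariant under multiplication by $b_i$ and shifting, up to the digit constraints), a single common point at one scale propagates to a common point at every deeper scale, and a pigeonhole/compactness argument over the finitely many possible ``digit-shift classes'' produces a genuine integer — or rather infinitely many integers, obtained by concatenating admissible digit blocks — whose base-$b_i$ expansion avoids $M_i$ for every $i$ simultaneously. Since each $C_i$ is bounded away from being a single point (again by the run-length hypothesis, there is always more than one admissible digit in each base, which is where $M_i \subseteq \{0,\dots,b_i-2\}$ and the run-length-$\ge b_i r \ge 1$ condition matter), the intersection is infinite and one extracts infinitely many integers.

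The main obstacle I anticipate is the first step: extracting a \emph{clean, uniform} lower bound on $\tau(C_i)$ from the combinatorial hypotheses on $M_i$, since $M_i$ is an essentially arbitrary subset satisfying only the three stated constraints, so $C_i$ need not be exactly self-similar — the gaps can have different sizes at the first level, and one must bound the thickness by comparing each gap to the bridge immediately to its left, taking a worst case over all admissible $M_i$. Getting the constant to come out as the same $e\,4\,(432)^2/\log 4$ that appears in $g_k$ (so that Theorems~\ref{thm: main1} and~\ref{thm: main2} share a gauge function, with only $M-2 \mapsto r(M-2)$) is the delicate bookkeeping. The intersection theorem and the passage to infinitely many integers are, by contrast, routine once the thickness bound is in hand.
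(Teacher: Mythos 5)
Your overall framework is right: compute a lower bound for the thickness of the missing-digit Cantor sets, then apply the quantitative thickness-intersection theorem (Theorem~\ref{thm: FY}), with the gauge condition $g_k(r(M-2))>0$ arising from the estimate $\tau(\mathcal{C}_{b_i,D_i})\gtrsim r(b_i-2)$. The first step, including your reading of why $g_k$ is evaluated at $r(M-2)$, matches the paper.

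However, there is a genuine gap in the middle of your argument: you never say \emph{how} to produce the ``suitable translates $t_i$'' making $\bigcap_i(C_i+t_i)$ nonempty, and this is the heart of the proof. The sets $A_{b_i,D_i}=\bigcupdot_j \mathcal{C}_{b_i,D_i,j}$ live at incompatible scales (copies of the Cantor set sitting near $b_i^j$ for $j\in\N$), so the first task is to find infinitely many $n$ for which the copies in different bases have roughly coincident convex hulls. The paper does this with an equidistribution argument: let $\vv{w}=(\log_{b_2}b_1,\dots,\log_{b_k}b_1)$ and use density of the orbit of the rotation by $\vv{w}$ on $\mathbb{T}^{k-1}$ to find infinitely many $n$ with all $\{n\log_{b_i}b_1\}$ small, so that $b_i^{[n\log_{b_i}b_1]}\approx b_1^n$ for every $i$. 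Even then the right endpoints $b_i^{[n\log_{b_i}b_1]+1}$ disagree badly because the $b_i$ differ, so one must truncate each $\mathcal{C}_{b_i,D_i,\cdot}$ on the right \emph{inside a first-level gap} to keep the convex hulls comparable without degrading the thickness; this is why the hypothesis $0\in M_i$ is really used (it guarantees the first-level gap at the top of the construction is large and is preserved in every refinement). None of this appears in your sketch. Your closing paragraph --- that a single common point ``propagates to every deeper scale'' by self-similarity and that one can ``concatenate admissible digit blocks'' across bases --- does not work: the self-similarities are by different ratios $b_1,\dots,b_k$, so a common point at one scale does not descend to a common point at a deeper scale, and digit blocks in base $b_1$ cannot be concatenated compatibly with digit blocks in base $b_2$. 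The infinitude of integers instead comes from the infinitude of good $n$'s delivered by the rotation argument, each good $n$ giving one real number in $\bigcap_i A_{b_i,D_i}$ whose integer part works. A smaller inaccuracy: the hypothesis in Theorem~\ref{thm: FY} is a summability bound $\sum_i\tau_i^{-c}\leq\cdot$, not a product bound $\prod_i\tau_i\geq\cdot$; you reach the right gauge inequality, but the mechanism is different.
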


Finally, we extend our method to study various missing-digit sets in an algebraic setting. The sets in this scenario are self-similar, possibly with rotations. We defer the detailed description of all the terminologies and notations in Section \ref{sec: algebraic}, where we prove the following result.

\begin{theorem}\label{thm: algebraic}
     Let $\B{K}$ be a number field so that $O_{\B{K}}$ is an Euclidean domain. For each $k\geq 2,$ there is a number $M>0$ such that for each $k$ different integers in $O_{\B{K}}$, say, $b_1,\dots,b_k$ with equal norm Galois embeddings, and with norm at least $M$, there are infinitely many integers $z\in O_{\B{K}}$ with the property that $z$ misses one arbitrarily fixed digit in base $b_j$ for all $j$. 
\end{theorem}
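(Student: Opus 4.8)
The plan is to transport everything into $\R^{d}$, $d=[\B{K}:\Q]$, via the Minkowski (canonical) embedding $\sigma\colon\B{K}\hookrightarrow\R^{r_1}\times\C^{r_2}\cong\R^{d}$, under which $O_{\B{K}}$ becomes a full lattice, and then to run the thickness argument behind Theorem~\ref{thm: main1} with the one–dimensional Newhouse gap lemma replaced by its $\R^{d}$ analogue. Under this embedding the missing–digit sets in the various bases become self–similar sets (with rotations) in $\R^{d}$, their thicknesses grow with $M$, and for $M$ large their intersection is nonempty and stable; a standard extraction then produces infinitely many integers of the required kind.

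In more detail, I would first use the two hypotheses as follows. Because $O_{\B{K}}$ is Euclidean, each $b_j$ admits a complete residue system $D_j^{+}$ for $O_{\B{K}}/b_jO_{\B{K}}$ with $|D_j^{+}|=|N(b_j)|$, and iterating the division algorithm every $z\in O_{\B{K}}$ has a finite base-$b_j$ expansion; deleting the prescribed digit leaves $D_j\subseteq D_j^{+}$ with $|D_j|=|N(b_j)|-1$, and the integers omitting that digit are exactly those whose expansion uses digits from $D_j$. The hypothesis that the Galois embeddings of $b_j$ all have equal absolute value forces multiplication by $b_j$ to act on $\R^{d}$ as $\rho_j U_j$ with $U_j$ orthogonal and $\rho_j=|N(b_j)|^{1/d}\ge M^{1/d}$ — plain scaling at the real places, rotation–scaling at the complex places — so the ``fractional'' missing–digit set $F_j:=\{\sum_{i\ge1}b_j^{-i}d_i:\ d_i\in D_j\}\subseteq\R^{d}$ is precisely the attractor of an IFS of $|N(b_j)|-1$ similarities of ratio $\rho_j^{-1}$ with rotational parts $U_j^{-1}$; without that hypothesis one would only get a self–affine set, for which the thickness machinery does not apply. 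Using Euclideanness I would choose $D_j^{+}$ so that the $|N(b_j)|$ corresponding sub–pieces sit inside a bounded fundamental region of the lattice in a controlled, tiling–like way.

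Next I would estimate thicknesses. In the $\R^{d}$ sense of thickness used earlier in the paper, $F_j$ is, at every scale, a bounded region from which a single sub–piece of diameter $\asymp\rho_j^{-1}$ times the surrounding material has been removed, so $\tau_{d}(F_j)\gtrsim\rho_j\ge M^{1/d}$, with an explicit constant once the digit set is fixed. Hence, for $M$ large in terms of $k$ and $\B{K}$, all $k$ sets clear the threshold of the $\R^{d}$ gap lemma, and after translating the digit systems so that the sets are suitably ``linked'' one gets $\bigcap_{j}F_j\neq\varnothing$, robustly. To pass from one intersection point to infinitely many honest integers I would repeat the one–dimensional device from the proof of Theorem~\ref{thm: main1}: apply the gap lemma not to the $F_j$ themselves but to dilated finite–level approximations $b_j^{n_j}F_j^{(n_j+1)}$ positioned at a common, growing scale $T$, so that a common point is forced into a single lattice–translated fundamental cell for each base; by the self–similar structure this cell is centred at an integer $z\in O_{\B{K}}$ whose base-$b_j$ expansion omits the prescribed digit for every $j$, and letting $T\to\infty$ produces infinitely many such $z$.

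I expect the main obstacle to be the rotations $U_j$. For $d\ge2$, thickness is not rotation invariant, so I must work with a version of the gap lemma, and of the thickness lower bound, that survives composition with the (unknown, $b_j$–dependent) orthogonal maps $U_j^{-1}$ — equivalently, I must verify that ``a fundamental region with one small sub–piece removed'' stays uniformly thick no matter how it is rotated, and that the mutual positions of the $k$ rotated tilings still allow the linking hypothesis of the $\R^{d}$ gap lemma to be met. Pinning down a choice of digit representatives concrete enough to make the thickness bound genuinely uniform is the other delicate point; the number–theoretic hypotheses (Euclideanness and equal absolute values of the conjugates) serve only to set up this geometric picture.
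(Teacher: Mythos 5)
Your overall plan — Minkowski embedding, self-similarity (not mere self-affinity) forced by the equal-norm hypothesis, thickness $\gtrsim |N(b_j)|^{1/d}$, dilated finite-level approximations at a common growing scale, and an application of the $\R^d$ gap lemma (Theorem~\ref{thm: FY}) — does track the paper's strategy. But there is a genuine gap at the very last step, precisely where you write that a common intersection point ``is forced into a single lattice-translated fundamental cell for each base; by the self-similar structure this cell is centred at an integer $z\in O_{\B{K}}$ whose base-$b_j$ expansion omits the prescribed digit for every $j$.'' This does not hold. The sets $C_{b_j,D_{b_j}}$ are fundamental domains that are rotated and shaped \emph{differently} for different $j$ (each $b_j$ has its own rotational part $U_j$), so a point $z$ in the intersection has a different ``integer part'' $z_j\in O_{\B{K}}$ with respect to each base, and these $z_1,\dots,z_k$ need not coincide. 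What the intersection actually gives you, as the paper spells out, is that the $z_j$ differ pairwise by elements of a finite set $B$ depending only on $\B{K}$. To convert this into a \emph{single} integer omitting the prescribed digit in all bases simultaneously, the paper uses an additional trick: instead of deleting one digit, delete also all digits near the corners of the fundamental square (i.e.\ within $\epsilon|b_j|$ of the corners) so that the resulting set still has large thickness, and so that if $z$ has all its digits in the trimmed set $D$ then $z+s$ still omits the forbidden digit for every $s\in B$. Your proposal does not contain this idea, and without it the final passage to honest integers fails.

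Two smaller points. First, the obstacle you flag regarding rotations — that ``thickness is not rotation invariant'' for $d\ge2$ — is not a real obstacle: the $\R^d$ thickness in the paper is defined purely in terms of $\mathrm{dist}$ and $\mathrm{diam}$, hence is invariant under isometries. The genuine geometric concern is different: the convex hulls of $b_j^{l_j}C_{b_j,D_j}$ are squares rotated by $\arg(b_j^{l_j})$, and one must align these rotations (and the scales) across $j$ before one can place a common ball $B$ inside all the convex hulls, as condition (2) of Theorem~\ref{thm: FY} demands. The paper does this via a logarithmic map $a\mapsto(\log|a|,\arg a)$ and a closure/equidistribution argument showing $b_1^{l_1}/b_2^{l_2}$ can be made close to $1$ in $\C$, not merely in modulus. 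Your ``common growing scale $T$'' device aligns only moduli, not arguments, so this step also needs the extra input. Second, you say the digit can be ``the prescribed'' one; the theorem allows a different single missing digit in each base, which the paper reduces to the zero case by minor modifications, but which requires the same trimming trick to survive translation by $B$.
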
 
\begin{remark}
    Here $O_{\B{K}}$ is the ring of integers in $\B{K}$. We require that it is an Euclidean domain. This is more for convenience than necessity. Our method would work in general number fields with careful modifications. In this paper, we do not pursue this level of generality. Another condition is that $b_1,\dots,b_k$ are integers with equal norm Galois embeddings, e.g. integers in complex quadratic fields. This condition is crucial for the application of the thickness argument. Should this condition fail, the geometry of the missing-digit set would be not self-similar but self-affine with small thickness. 
\end{remark}
\begin{remark}
    This result can be made quantitative with the explicit constants provided in Theorem \ref{thm: FY} as well as the detailed knowledge of the number field $\B{K}$. We do not pursue this refinement in this paper.
\end{remark}

We need the notion of thickness on the real line. Recall that every compact set $C$ on the real line can be constructed by starting with a closed interval $I$ (its convex hull), and successively removing disjoint open complementary intervals (they are the path-connected components of the complement of $C$). Clearly there are finitely or countably  many disjoint open complementary intervals $(G_n)_n$, which we may assume are ordered so that their length $|G_n|$ is decreasing (if some intervals have the same length, we order them arbitrarily). We emphasize that the two unbounded connected components of $\mathbb{R}\setminus C$ are not considered as we start with the convex hull $I$. Note that $G_{n+1}$ is a subset of some connected component $I_{n+1}$ (a closed interval) of $I\setminus (G_1\cup\cdots\cup G_n)$. We say that $G_{n+1}$ is removed from $I_{n+1}$.

\begin{definition}
Let $C \subset \R$ a compact set with convex hull $I$, and let $(G_n)$ be the open intervals making up $I\setminus C$, ordered in decreasing length. Each $G_n$ is removed from a closed interval $I_n$, leaving behind two closed intervals $L_n$ and $R_n$; the left and right pieces of $I_n \setminus G_n$.

We define the thickness of $C$ as
\[
\tau (C):= \inf_{n \in \N} \frac{\min \{ |L_n| , |R_n| \}}{|G_n|}.
\]
Note that the sequence of complementary intervals $(G_n)_n$ may be finite, and in this case the infimum is taken over the finite set of indices.

We define the thickness of a singleton as $0$, and the thickness of a (non-degenerate) interval as $+\infty$.
\end{definition}

It can be checked that if there are some intervals of equal length, then the way in which we order them does not affect the value of $\tau(C)$. See \cite{Yav,HKY,Astels} for more information on Newhouse thickness and alternative definitions.

\begin{example}\label{excantor}
Let $M_{\varepsilon}$ be the middle-$\varepsilon$ Cantor set obtained by starting with the interval $[0,1]$ and repeatedly deleting from each interval appearing in the construction the middle open interval of relative length $\varepsilon$. Then $\tau (M_{\varepsilon})= \frac{1-\varepsilon}{2 \varepsilon}$.
\end{example}

For the algebraic setting, we need the following result from \cite[Theorem 6]{FalYav} to guarantee that intersections of thick sets in $\mathbb{R}^d,d\geq 1$ are non-empty. The definition of thickness can be generalised to $\mathbb{R}^d$. See \cite{FalYav}. 

Given a compact subset $C$ of $\mathbb{R}^d$, we define $(G_n)_{n=1}^\infty$ to be the (at most) countably many open bounded path-connected components of the complement $C$ and $E$ to be the unbounded open path-connected component of the complement $C$ (except when $d = 1$ when $E$ consists of two unbounded intervals). We call $E$ together with $G_n,n\geq 1$ the gaps of $C$. We may assume that the sequence of bounded gaps $G_n,n\geq 1$ is ordered by non-increasing diameter. We write $\mathrm{dist}$ for the usual distance between points or non-empty subsets of $\mathbb{R}^d$ and $\mathrm{diam}$ for the diameter of a non-empty subset.
\begin{definition}
    Let $C\subset\mathbb{R}^d$ be given as above. The thickness $\tau(C)$ is given as
    \[
    \tau(C)=\inf_{n\geq 1} \frac{\mathrm{dist}(G_n,\bigcup_{i\leq n-1}G_i\cup E)}{\mathrm{diam}(G_n)}
    \]
    if $E$ is not the only path-connected component of the complement of $C$. Otherwise, we define
    \[
    \tau(C)=\begin{cases}
        \infty \text{ if } C^\circ\neq\emptyset\\
        0 \text{ if } C^\circ=\emptyset
    \end{cases}
    \]
\end{definition}

\begin{theorem}\label{thm: FY}
Let $(C_i)_i$ be a family of countably many compact sets in $\R^d$, where $C_i$ has thickness $\tau_i>0$ such that
\begin{enumerate}
\item $\sup_i \diam (C_i)<\infty$,
\item there is a ball $B$ contained in the convex hull of every $C_i$,
\item there exists $c\in (0,d)$ so that \[\sum_i \tau_i^{-c}\leq \frac{1}{K_2}\beta^c (1-\beta^{1-c}),\]
where $\beta:=\min\{\frac{1}{4}, \frac{\diam (B)}{\sup_i \diam (C_i)} \}$ and $K_2:=\left(\frac{(24\sqrt{d})^d (1+4^d 2)}{1-\frac{1}{2^d}}\right)^2$.
\end{enumerate}
Then, \[\dim_H(B\cap \bigcap_iC_i) >0.\]
\end{theorem}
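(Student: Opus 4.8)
The plan is to construct a compact subset of $B\cap\bigcap_i C_i$ of positive thickness by a recursion that imposes the membership constraints ``$\subseteq C_n$'' one at a time, keeping quantitative track of the thickness lost at each stage; hypothesis (3) will bound the total loss. Enumerate the family as $C_1,C_2,\dots$ and build compact sets $B\supseteq D_0\supseteq D_1\supseteq\cdots$ with $D_n\subseteq\bigcap_{i\le n}C_i$, where $D_n$ has thickness $\ge t_n$ and carries a ball $B_n$ of radius $r_n$ inside its convex hull. If $\inf_n t_n>0$ and $\inf_n r_n>0$, then $D_\infty:=\bigcap_nD_n$ is non-empty (a decreasing intersection of non-empty compacta), has $\tau(D_\infty)\ge\inf_n t_n>0$, and hence $\dim_H D_\infty>0$ (a compact subset of $\R^d$ of positive thickness has positive Hausdorff dimension); since $D_\infty\subseteq B\cap\bigcap_i C_i$, the theorem follows. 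So everything reduces to running the recursion with the $t_n$ and $r_n$ bounded away from $0$.

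The engine of the recursion is a quantitative $\R^d$-version of Newhouse's gap lemma, in the countable, explicit spirit of Astels' refinement on the line, and its proof is where I expect the real work to lie. In the form needed it should read: if $A,C\subseteq\R^d$ are compact of thicknesses $\tau_A,\tau_C$, if a ball $B'$ lies in $\mathrm{conv}(A)\cap\mathrm{conv}(C)$ with $\diam B'\ge\beta\max\{\diam A,\diam C\}$, and $\tau_A,\tau_C$ are large enough in terms of $\beta$, then $A\cap C$ contains a compact set $D$ with a ball of radius $\ge\rho\cdot\mathrm{rad}(B')$ in its convex hull and with $\tau(D)^{-c}\le\tau_A^{-c}+K_2\,\tau_C^{-c}$ for the prescribed exponent $c\in(0,d)$ (after perhaps shrinking the absolute constants $\beta,\rho$). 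To prove this one removes from $B'$, in order of non-increasing diameter, all the bounded gaps of $A$ and of $C$ that meet $B'$, and verifies that each removed gap $G$ stays at distance $\ge\tau(D)\,\diam G$ from every previously removed gap and from $\R^d\setminus B'$. Since a gap of $A$ is automatically far from the larger gaps of $A$ (and likewise for $C$), the only dangerous configuration is a small gap of one set lying close to a much larger gap of the other; one controls this by covering the offending region around the larger gap with boundedly many balls and using the thickness of each set to bound, scale by scale, the proportion of its convex hull that lies within a fixed multiple of its own gaps. The doubling geometry of $\R^d$ — the factor $(24\sqrt d)^d$, the term $1+4^d\cdot 2$, the denominator $1-2^{-d}$ — enters precisely here, as does the freedom in the exponent $c$, which is what lets the relevant $c$-content add rather than compound across the family; together these produce the stated $K_2$.

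Granting the lemma, take $A=D_{n-1}$, $C=C_n$, $B'$ a fixed sub-ball of $B$. This is legitimate at every stage: each gap of $C_n$ has diameter $\lesssim\diam C_n/\tau_n$ (it must fit inside $\mathrm{conv}(C_n)$ with a collar of that width), so intersecting with $C_n$ only excises small gaps and barely disturbs the convex hull, whence $r_n\ge r_{n-1}\bigl(1-O(\tau_n^{-1})\bigr)$; since the $\tau_i\to\infty$ and $\sum_i\tau_i^{-c}<\infty$ we get $\sum_i\tau_i^{-1}<\infty$, so this product converges and $\inf_n r_n>0$. Starting from $D_0=B$, a ball, with $\tau(D_0)=\infty$, the recursion gives $t_n^{-c}\le t_{n-1}^{-c}+K_2\tau_n^{-c}$, hence by telescoping and hypothesis (3),
\[
\Bigl(\inf_n t_n\Bigr)^{-c}\ \le\ K_2\sum_i\tau_i^{-c}\ \le\ \beta^c\bigl(1-\beta^{1-c}\bigr)\ <\ 1,
\]
the last inequality because $0<\beta\le\tfrac14$ and $0<c<1$ (the latter being forced by (3), as otherwise its right-hand side is nonpositive). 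Thus $\inf_n t_n>1>0$, and the construction delivers $D_\infty$ with $\dim_H\bigl(B\cap\bigcap_i C_i\bigr)\ge\dim_H D_\infty>0$. Hypotheses (1)–(2) are used exactly to make all the $\diam C_i$ comparable to $\diam B$ and to keep a ball of controlled size available throughout the recursion, while the role of $c$ is that the limiting set supports a natural measure of Frostman exponent $\sim c$, which is what the summability condition on $\tau_i^{-c}$ is calibrated to.
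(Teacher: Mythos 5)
The paper does not prove this theorem: it is quoted verbatim as Theorem~6 of Falconer--Yavicoli \cite{FalYav} and is used as a black box, so your proposal is to be measured against that source rather than against anything in this paper. The route you sketch is genuinely different from the one in \cite{FalYav}. There, the $C_i$ are not intersected one at a time while tracking the thickness of the running intersection; instead a Cantor-type subset of $B\cap\bigcap_i C_i$ is built \emph{directly}, as the limit of a nested hierarchy of balls in which, at each scale, one discards the balls that come too close to a gap of some $C_i$ of comparable size. The constant $K_2$ comes from covering and counting estimates on how many gaps of a given scale can meet a fixed ball, and the exponent $c$ calibrates a Frostman measure carried by the surviving tree; no intermediate ``thick partial intersection'' ever appears in the argument.

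There is a reason for this, and I believe it is a genuine gap in your proposal that cannot be repaired. The $\R^d$ thickness used here is defined through the \emph{bounded} components of the complement: a compact $D\subset\R^d$ with empty interior whose complement is connected has $\tau(D)=0$ by definition. For $d\geq 2$ every totally disconnected compact set has connected complement, hence thickness $0$, and so does every compact subset of it. So your engine lemma --- that $A\cap C$ contains a compact $D$ with $\tau(D)^{-c}\leq \tau_A^{-c}+K_2\tau_C^{-c}$ --- cannot hold however thick $A$ and $C$ are, because generically $A\cap C$ is totally disconnected (think of two Sierpinski-carpet-like sets in general position) and then \emph{no} candidate $D$ of positive thickness exists. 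The same problem shows your conclusion is too strong: if $D_\infty$ had $\tau(D_\infty)>0$ and empty interior, then $\R^d\setminus D_\infty$ would have a bounded component $G$ besides the unbounded one $E$, so $D_\infty$ would separate $G$ from $E$ in $\R^d$ and therefore have topological (hence Hausdorff) dimension at least $d-1$. But the theorem only asserts $\dim_H>0$, and the source gives quantitative lower bounds tied to $c$ that are far below $d-1$ in the regime where the hypotheses are tight, so the intersection really can fail to contain any thick compact set. Your outer bookkeeping --- the telescoping estimate, the observation that hypothesis~(3) forces $c<1$, the convergent product keeping $\inf_n r_n>0$ --- is sound, but the gap lemma it hinges on is false in $\R^d$ for $d\geq2$. (A thickness framework that \emph{does} apply to totally disconnected sets in $\R^d$ is the separate one in \cite{Yav2}; it is not the notion used in the statement you are proving.)
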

\begin{remark}
This result is quantitative. For example, if $d=1$, we have $K_2=432^2$.
\end{remark}
Note that this definition is not useful for totally disconnected sets in $\mathbb{R}^d$, but there is an alternative definition of thickness and related results in that context, see \cite{Yav2}.

To prove Theorem \ref{thm: algebraic}, we will apply this result for self-similar sets with uniform scaling ratio in $\mathbb{R}^d$. Let $K$ be a self-similar set in $\mathbb{R}^d$. Let $H$ be the convex hull of $K$. The self-similar set $K$ will have the property that $H$ is close (in the sense of Hausdorff metric) to be a cube of scale $\approx 1$. Let $f_1,\dots,f_k$ be a collection of similarity maps that defines $K$. Then $\bigcup_{j=1}^k f_i(H)$ is also close (in the sense of Hausdorff metric) to be a cube of scale $\approx 1$. Moreover, the complement of $K$ will not be path-connected. Such a self-similar set $K$ will have a large thickness. Next, given a multiple collection of such $K$'s, we will consider the case when their convex hulls are close in the sense of Hausdorff metric. We will achieve a scenario in which Theorem \ref{thm: FY} applies because the $\tau_i$'s are large enough numbers while $\beta$ can be bounded away from zero in a uniform manner.

\section{Omitting the digit $0$}

Let $b \in \N_{\geq 3}$, we define the compact set
\[\mathcal{C}_{b,0}:=\{x \in [0,1]: \ x=\sum_{i<0}c_i b^i \text{ with } c_i \in \{1, \cdots, b-1\}\}.\]
This is the set of numbers in the interval $[0,1]$ that can be written without the digit $0$ in base $b$.

One can see that the convex hull of $\mathcal{C}_{b,0}$ is $[\frac{1}{b-1},1]$, and its thickness is $b-2$. To see this, notice that because of the self-similarity and avoidance of an endpoint digit, we just need to look at the quotients involved in the definition of thickness for the first step of construction: from an interval of length $1-\frac{1}{b-1}$ we remove gaps of length $\frac{1}{b} \frac{1}{b-1}$ leaving intervals of length $\frac{1}{b}(1-\frac{1}{b-1})$, so the infimum of the quotients of the first level is $\frac{\frac{1}{b}(1-\frac{1}{b-1})}{\frac{1}{b} \frac{1}{b-1}}=b-2$.

We also define for every $j\in \N$ its expanded version by $b^j$ as \[\mathcal{C}_{b,j}:=b^j \mathcal{C}_{b,0}=\{x \in [0,b^j]: \ x=\sum_{i<j}c_i b^i \text{ with } c_i \in \{1, \cdots, b-1\}\}.\]
Its convex hull is $b^j [\frac{1}{b-1},1]$, its thickness is $b-2$, the length of the largest gaps is $b^j \frac{1}{b}\frac{1}{(b-1)}$ and the length of the intervals after removing the largest gaps is $b^j\frac{1}{b}(1-\frac{1}{b-1})$. Note that for a fixed $b$, the sets $\mathcal{C}_{b,j}$ are disjoint and moving to the right (when $j$ moves to infinity). Then, the set of positive numbers that can be written in base $b$ without the digit $0$ is
\begin{align*}A_{b} & :=\{x \in (0, \infty): \ x=\sum_{i<j}c_i b^i \text{ with } c_i \in \{1, \cdots, b-1\} \text{ for some } j \in \N\}\\
&= \bigcupdot_{j \in \N} \mathcal{C}_{b,j}.\end{align*}
\begin{proof}[Proof of Theorem \ref{thm: main1}]
We can assume that $b_1< \cdots < b_k$.
For each $i \in \{1, \cdots, k\}$ we define
\begin{align*}A_{b_i}&:=\{x\in (0, \infty): \ \exists N \in \Z \text{ so that } x\\ &=\sum_{j \in \Z_{\leq N}} c_j b_i^j \text{ with } c_j\in \{1,\cdots, b_i-1\}\},\end{align*}
the set of positive numbers for which the $b_i$-ary expansion does not contain the digit $0$.
And \[A:=\bigcap_{1\leq i \leq k}A_{b_i},\] the set of positive numbers that can be written in all bases $b_1, \cdots, b_k$ without the digit $0$.

It is enough to prove that $A$ is unbounded. Then we can find an arbitrarily large $x \in A$, so the integer part $[x] \in \Z$ is arbitrarily large and can be written omitting the digit $0$ in all bases $b_1, \cdots, b_k$ (when $ x=\sum_{j \in \Z_{\leq N}} c_j b_i^j$, the integer $[x]$ would be written as $\sum_{0\leq j \leq N} c_j b_i^j$).

The idea is that in order to prove that $A$ is unbounded, we will get a sequence of  $k$-tuples of compact subsets $(B_{1,n}, \cdots, B_{k,n})_n$, where $B_{i,n}\subseteq A_{b_i}$, the sets $B_{i,n}$ are thick and the convex hulls of sets in a tuple are not so different (in this way we will ensure a non-empty intersection, i.e: $\exists \, x_n \in \bigcap_{1\leq i\leq k} B_{i,n} \subseteq A$), but the sequence of tuples of sets that are alike will be moving to infinity.

We define a sequence of $k$-tuples $(a_j)_{j \in \N}$ where $a_j:=(a_{1,j}, \cdots, a_{k,j})$ as follows:
\begin{align*}
a_j&:=(b_1^j, b_2^{[j \log_{b_2}(b_1)]}, \cdots, b_k^{[j \log_{b_k}(b_1)]})\\
&=b_1^j(1, b_2^{-\{j \log_{b_2}(b_1)\}}, \cdots, b_k^{-\{j \log_{b_k}(b_1)\}}),
\end{align*}
where the last equal holds because $$b_i^{[j \log_{b_i}(b_1)]} b_i^{\{j \log_{b_i}(b_1)\}}=b_i^{j \log_{b_i}(b_1)}=b_i^{\log_{b_i}(b_1^j)}=b_1^j.$$

We define the rotation of angle $w:=(\log_{b_2}(b_1), \cdots, \log_{b_k}(b_1))$ in the $k-1$-dimensional torus, starting at $\vv{0}$.
Since the orbit of any rotation starting at $\vv{0}$ in any torus passes as close to $\vv{0}$ as we want infinitely many times, we find that
\[\{n\vv{w} \ \text{mod}(\Z^{k-1})\}_{n\geq 0}=(\{n\log_{b_2}(b_1)\}, \cdots, \{n\log_{b_k}(b_1)\})_{n\geq 0}\] passes as close as $\vv{0}$ as we want infinitely many times.

Let $\delta \in (0,\frac{1}{4})$ small enough. Let $\varepsilon>0$ be a small number so that $b_i^{-\varepsilon}>1-\delta$ for all $1\leq i \leq k$ (i.e.: $\varepsilon< \frac{|\log(1-\delta)|}{\log(\max_{1\leq i \leq k} b_i)}$) ($\varepsilon$ depends on $\max_{1\leq i \leq k} b_i$ and $\delta$). We will work with a subsequence of $n \in \N$ so that $\dist (n\vv{w}, \Z^{k-1})<\varepsilon$.
Then, writing $v_i:=b_i^{-\{n \log_{b_i}(b_1)\}}$ for $2\leq i \leq k$ we have \[a_n=b_1^n(1, b_2^{-\{n \log_{b_2}(b_1)\}}, \cdots, b_k^{-\{n \log_{b_k}(b_1)\}})=b_1^n(1, v_2, \cdots, v_k),\] where $1\geq v_i:=b_i^{-\{n \log_{b_i}(b_1)\}}>b_i^{-\varepsilon}>1-\delta$ (so multiplying by $v_i$ represents a small perturbation). Since $b_i^{[n \log_{b_i}(b_1)]} b_i^{\{n \log_{b_i}(b_1)\}}=b_1^n$, by definition of $v_i$ we get \[b_i^{[n \log_{b_i}(b_1)]}=b_1^n v_i \text{ with } v_i \in (1-\delta, 1].\] Because of the observation before this theorem: For each $A_{b_i}$, $A_{b_i}\cap (a_{i,n}, b_i a_{i,n}]$ contains the set \[b_i^{[n\log_{b_i}(b_1)]+1} \mathcal{C}_{b_i,0}=:b_i \mathcal{C}_{b_i,[n\log_{b_i}(b_1)]}.\] This set has thickness $b_i-2$, convex hull $b_i^{[n\log_{b_i}(b_1)]}[\frac{b_i}{b_i-1},b_i]=b_1^n v_i [\frac{b_i}{b_i-1},b_i]$, length of largest gaps equal to $b_i b_1^n v_i \frac{1}{b_i-1}$ and length of remaining intervals while removing those gaps equal to $b_i b_1^n v_i(1-\frac{1}{b_i-1})$.

When $M$ is large, $\frac{b_i}{b_i-1}$ is very close to $1$ ($2\leq i \leq k$), and we know that $v_i$ is a small perturbation, so intuitively all the left endpoints of these sets are very close to $b_1^n$. This is not true for the right endpoints, so we need to erase some part at the right of these sets but without a drop in their thickness. An easy way to erase the right part of a set without a drop of the thickness, is erasing everything at the right of a point in a gap of first level, this means the largest gaps (in this way we keep the relationship between bridges and gaps appearing in the definition of thickness, but we have fewer quotients).

The right endpoints are $b_1^n v_i b_i$ where $v_i$ is close to $1$ and $b_1< \cdots < b_k$. We will erase a part of the set at the right, close to the endpoints $b_1^n v_i b_i$ for $2\leq i \leq k$ so that the new right endpoints are close to $b_1^{n+1}$ (which is the right endpoint of $b_1 C_{b_1,n}$).

Notice that $A_{b_i}\cap b_1^n v_i [\frac{b_i}{b_i-1}, b_i]\supset b_1^n v_i b_i C_{b_i,0}$ (which is close to $b_1^n [1,b_i]\supseteq b_1^n [1, b_1+1]$). The set $b_1^n v_i b_i C_{b_i,0}$ has convex hull $b_1^n v_ib_i[\frac{1}{b_i-1},1]$. Since the largest gaps in $\mathcal{C}_{b_i,0}$ have length $\frac{1}{b_i(b_i-1)}$, the largest gaps in $b_1^n v_i b_i C_{b_i,0}$ have length $|G|=b_1^n \frac{v_i}{b_i-1}$ and the intervals of the first level of construction (after removing the largest gaps) have length $|I|\leq b_1^n (\frac{1}{b_i}-\frac{1}{b_i(b_i-1)})b_i v_i= b_1^n v_i \frac{b_i-2}{b_i-1}$, we see that \[|G|+|I|\leq b_1^n v_i\leq b_1^n.\]

Given an interval $G$ we define $\ell (G)$ to be the left endpoint of $G$.

Let's consider $G_i$ the gap of first level of $b_1^n v_i b_i C_{b_i,0}$ so that the distance from $\ell (G_i)$ to $b_1^{n+1}$ is $\leq \frac{|G_i|+|I_i|}{2}\leq \frac{b_1^n}{2}$. We define $R_{i,n}:=\ell (G_i)$, and restrict $b_1^n v_i b_i C_{b_i,0}$ to the left of $R_{i,n}$.

We define \[B_{i,n}:= A_{b_i}\cap [b_1^n v_i \frac{b_i}{b_i-1}, R_{i,n}].\]
Let's denote $L_{i,n}:=b_1^n v_i \frac{b_i}{b_i-1}$.

We know that $R_{i,n}\in [b_i^{n+1} -\frac{b_i^{n}}{2}, b_i^{n+1} +\frac{b_i^{n}}{2}]$, and $L_{i,n}\in [b_i^{n} v_i, b_i^{n} \frac{4}{3}]$.

We have:
\begin{enumerate}
\item $\tau (B_{i,n})\geq b_i -2$ for every $1\leq i \leq k$,
\item We define $B:=\bigcap_{1\leq i \leq k} \text{conv} (B_{i,n})$. Since $R_{i,n}\geq b_1^{n+1}-\frac{b_1^n}{2}$, $L_{i,n}\leq b_1^n \frac{4}{3}$ for every $2\leq i \leq k$ and $b_1\geq 4$, we have \[|B|\geq b_1^{n+1}-b_1^n \frac{1}{2}-b_1^n \frac{4}{3}\geq b_1^{n+1} \frac{13}{24}.\]
\item Using that $b_1\geq 4$, we get
\begin{align*}
\max_{1\leq i \leq k} \diam (\text{conv} (B_{i,n}))&\leq \max_{1\leq i \leq k} b_1^{n+1}+\frac{b_1^n}{2}-b_1^n v_i\\
&\leq b_1^{n+1}+\frac{b_1^n}{2}\leq b_1^{n+1} (1+\frac{1}{2b_1})\leq b_1^{n+1}\frac{9}{8}
\end{align*}
\end{enumerate}

Let's see that if $b_1, \cdots, b_k \geq M$ then 
$\bigcap_i B_{i,n}\neq \emptyset$.

Let's apply the Theorem \ref{thm: FY} with $d=1$ to $C_i:=B_{i,n}$.
We already know that the two first hypotheses hold. Let's see the third one.

Since
$\frac{\diam (B)}{\sup_i \diam (C_i)}\geq \frac{b_1^{n+1} \frac{13}{24}}{b_1^{n+1} \frac{9}{8}}=\frac{13}{27}\geq \frac{1}{4}$, we see that $\beta=\frac{1}{4}$.

We need to show that there exists $c\in (0,1)$ so that \[\sum_{1\leq i\leq k} (b_i-2)^{-c}\leq \frac{1}{432^2}(\frac{1}{4})^c (1-(\frac{1}{4})^{1-c}).\]

Taking $c:=1-\frac{1}{\log(\frac{b_1-2}{4})}$,  since $b_1\leq b_i$ for every $1\leq i \leq k$, it is enough to show that $k (432)^2 \leq \frac{b_1-2}{4} \frac{1}{e}(1-(1/4)^{1/\log(\frac{b_1-2}{4})})$.

Since $f:[3,\infty)\to \R$ defined by $f(x):=\log(x)(1-(1/4)^{1/\log(\frac{x}{4})})$ is a decreasing function converging to $\log(4)$ when $x\to\infty$, considering $x=b_1-2$, one can see that it is enough to show the following:
\[k \frac{e 4 (432)^2}{\log (4)} \leq \frac{b_1-2}{\log(b_1-2)}.\]

Since $g_k:[3,\infty)\to \R$ defined by $g_k(x):=\frac{x}{\log(x)}-k \frac{e 4 (432)^2}{\log (4)}$ is an increasing function with $g_k(3)<0$ and $\lim_{x\to \infty} g_k(x)=\infty$, there is $M-2 \geq 3$ so that $g(M-2)>0$ (and we can take the smallest $M \in \N_{\geq 5}$ satisfying that). Then, for every $b_1\geq M=M(k)$, we have $k (432)^2 \leq \frac{b_1-2}{4} \frac{1}{e}(1-(1/4)^{1/\log(\frac{b_1-2}{4})})$.

This finishes the proof. If we want to get a more explicit $M$ which would work, we can consider, for example, $b_1-2=x=k \log^{1+\varepsilon}(k)$ (i.e: $M=k \log^{1+\varepsilon}(k)+2$), because $g_k(x)=\frac{k \log^{1+\varepsilon}(k)}{\log(k \log^{1+\varepsilon}(k))}-k \frac{e 4 (432)^2}{\log (4)}$ is positive considering $k$ large enough.
\end{proof}

\section{Omitting multiple separated digits}\label{sec: omit nonzero}

We have considered sets of integers omitting the digit zero (in various bases simultaneously). Now we want to consider the case when the missing digit(s) may not be zero. Our topological argument is sensitive to the choice of digits. Other than that, the argument is very similar to the proof of Theorem \ref{thm: main1}.

\begin{proof}[Proof of Theorem \ref{thm: main2}]
We can assume that $b_1< \cdots < b_k$. Let $b \in \N_{\geq 3}$, we define $D:=\{0, \cdots, b-1\}\setminus D^C$ (the set of digits to use) and the compact set
\[\mathcal{C}_{b,D}:=\{x \in [0,1]: \ x=\sum_{i<0}c_i b^i \text{ with } c_i \in D\}.\]

One can see that the convex hull is $[\frac{1}{b-1},1]$ (by using the geometric series, and the fact that $0 \notin D$ and $b-1 \in D$). Gaps of level $k$ (Newhouse's structure) have length $|G_k|=\frac{1}{b^k (b-1)}$ with remaining intervals of length $|I_k|=\frac{r-\frac{1}{b(b-1)}}{b^{k-1}}$. So, $\mathcal{C}_{b,D}$ has thickness is $=(b-1)r-\frac{1}{b}\geq r(b-2)$.

We also define for every $j\in \N$ its expanded version by $b^j$ as \[\mathcal{C}_{b, D, j}:=b^{j+1} \mathcal{C}_{b,D}.\]
Its convex hull is $b^{j+1} [\frac{1}{b-1},1]$, its thickness is $\geq r(b-2)$.

Note that for a fixed $b$, the sets $\mathcal{C}_{b,D,j}$ are disjoint and moving to the right (when $j$ moves to infinity). Then, the set of positive numbers that can be written in base $b$ with digits in $D$ is
\begin{align*}A_{b, D}&:=\{x \in (0, \infty): \ x=\sum_{i<j}c_i b^i \text{ with } c_i \in D \text{ for some } j \in \N\}\\ &= \bigcupdot_{j \in \N} \mathcal{C}_{b, D, j}.\end{align*}
For each $i \in \{1, \cdots, k\}$ we define $A_{b_i, D_i}$ as before, and
\[A:=\bigcap_{1\leq i \leq k}A_{b_i, D_i},\] the set of positive numbers that can be written in bases $b_1, \cdots, b_k$ with digits in $D_i$ respectively.

As before, it is enough to prove that $A$ is unbounded. For this, we will get a sequence of  $k$-tuples of compact subsets $(B_{1,n}, \cdots, B_{k,n})_n$, where $B_{i,n}\subseteq A_{b_i}$, the sets $B_{i,n}$ are thick and the convex hulls of sets in a tuple are not so different (in this way we will ensure a non-empty intersection, i.e: $\exists \, x_n \in \bigcap_{1\leq i\leq k} B_{i,n} \subseteq A$), but the sequence of tuples of sets that are alike will be moving to infinity.

We define a sequence of $k$-tuples $(a_j)_{j \in \N}$ where $a_j:=(a_{1,j}, \cdots, a_{k,j})$ as follows:
\begin{align*}
a_j&:=(b_1^j, b_2^{[j \log_{b_2}(b_1)]}, \cdots, b_k^{[j \log_{b_k}(b_1)]})\\
&=b_1^j(1, b_2^{-\{j \log_{b_2}(b_1)\}}, \cdots, b_k^{-\{j \log_{b_k}(b_1)\}}),
\end{align*}
where the last equal holds because $$b_i^{[j \log_{b_i}(b_1)]} b_i^{\{j \log_{b_i}(b_1)\}}=b_i^{j \log_{b_i}(b_1)}=b_i^{\log_{b_i}(b_1^j)}=b_1^j.$$

We define the rotation of angle $w:=(\log_{b_2}(b_1), \cdots, \log_{b_k}(b_1))$ in the $k-1$-dimensional torus, starting at $\vv{0}$.
Since the orbit of any rotation starting at $\vv{0}$ in any torus passes as close as $\vv{0}$ as we want infinitely many times, we have that
\[\{n\vv{w} \ \text{mod}(\Z^{k-1})\}_{n\geq 0}=(\{n\log_{b_2}(b_1)\}, \cdots, \{n\log_{b_k}(b_1)\})_{n\geq 0}\] passes as close as $\vv{0}$ as we want infinitely many times.

Let $\delta \in (0,\frac{1}{4})$ small enough. Let $\varepsilon>0$ be a small number so that $b_i^{-\varepsilon}>1-\delta$ for all $1\leq i \leq k$ (i.e.: $\varepsilon< \frac{|\log(1-\delta)|}{\log(\max_{1\leq i \leq k} b_i)}$) ($\varepsilon$ depends on $\max_{1\leq i \leq k} b_i$ and $\delta$).

We will work with a subsequence of $n \in \N$ so that $\dist (n\vv{w}, \Z^{k-1})<\varepsilon$.
Then, writing $v_i:=b_i^{-\{n \log_{b_i}(b_1)\}}$ for $2\leq i \leq k$ we have that \[a_n=b_1^n(1, b_2^{-\{n \log_{b_2}(b_1)\}}, \cdots, b_k^{-\{n \log_{b_k}(b_1)\}})=b_1^n(1, v_2, \cdots, v_k),\] where $1\geq v_i:=b_i^{-\{n \log_{b_i}(b_1)\}}>b_i^{-\varepsilon}>1-\delta$ (so multiplying by $v_i$ represents a small perturbation).

Since $b_i^{[n \log_{b_i}(b_1)]} b_i^{\{n \log_{b_i}(b_1)\}}=b_1^n$, by definition of $v_i$ we get \[b_i^{[n \log_{b_i}(b_1)]}=b_1^n v_i \text{ with } v_i \in (1-\delta, 1].\]

Each $A_{b_i, D_i}$ contains in $b_i^{[n\log_{b_i}(b_1)]}[\frac{b_i}{b_i-1},b_i]$ the set \[b_i^{[n\log_{b_i}(b_1)]+1} \mathcal{C}_{b_i,D_i}=:b_i b_1^n v_i \mathcal{C}_{b_i,D_i}\] which has: thickness $\geq r(b_i-2)$, convex hull $b_i^{[n\log_{b_i}(b_1)]}[\frac{b_i}{b_i-1},b_i]=b_1^n v_i [\frac{b_i}{b_i-1},b_i]$.

We cut each set at the right at $R_{i,n}$ as in the last section, in a gap of first level associated to the avoidance of the digit $0$ (which is the largest one). Here we used that $0 \notin D_i$, so holes of sets from the last section are contained in holes of sets from this section, and we can use them to cut the sets at the right without a drop in their thickness.
We define \[B_{i,n}:= A_{b_i}\cap [b_1^n v_i \frac{b_i}{b_i-1}, R_{i,n}].\]
Let's denote $L_{i,n}:=b_1^n v_i \frac{b_i}{b_i-1}$.

We know that $R_{i,n}\in [b_i^{n+1} -\frac{b_i^{n}}{2}, b_i^{n+1} +\frac{b_i^{n}}{2}]$, and $L_{i,n}\in [b_i^{n} v_i, b_i^{n} \frac{4}{3}]$.

We have:
\begin{enumerate}
\item $\tau (B_{i,n})\geq r(b_i -2)$ for every $1\leq i \leq k$,
\item We define $B:=\bigcap_{1\leq i \leq k} \text{conv} (B_{i,n})$. Since $R_{i,n}\geq b_1^{n+1}-\frac{b_1^n}{2}$, $L_{i,n}\leq b_1^n \frac{4}{3}$ for every $2\leq i \leq k$ and $b_1\geq 4$, we have \[|B|\geq b_1^{n+1}-b_1^n \frac{1}{2}-b_1^n \frac{4}{3}\geq b_1^{n+1} \frac{13}{24}.\]
\item Using that $b_1\geq 4$, we get
\begin{align*}
\max_{1\leq i \leq k} \diam (\text{conv} (B_{i,n}))&\leq \max_{1\leq i \leq k} b_1^{n+1}+\frac{b_1^n}{2}-b_1^n v_i\\
&\leq b_1^{n+1}+\frac{b_1^n}{2}\leq b_1^{n+1} (1+\frac{1}{2b_1})\leq b_1^{n+1}\frac{9}{8}
\end{align*}
\end{enumerate}

Let's see that if $b_1, \cdots, b_k \geq M$ then
$\bigcap_i B_{i,n}\neq \emptyset$.

Let's apply the Theorem \ref{thm: FY} to $C_i:=B_{i,n}$.
We already know that the two first hypotheses hold. Let's see the third one.

Since
$\frac{\diam (B)}{\sup_i \diam (C_i)}\geq \frac{b_1^{n+1} \frac{13}{24}}{b_1^{n+1} \frac{9}{8}}=\frac{13}{27}\geq \frac{1}{4}$. Then $\beta=\frac{1}{4}$.

We need to show that there exists $c\in (0,1)$ so that \[\sum_{1\leq i\leq k} (r(b_i-2))^{-c}\leq \frac{1}{432^2}(\frac{1}{4})^c (1-(\frac{1}{4})^{1-c}).\]

Taking $c:=1-\frac{1}{\log(\frac{r(b_i-2)}{4})}$,  since $b_1\leq b_i$ for every $1\leq i \leq k$, it is enough to show that $k (432)^2 \leq \frac{r(b_i-2)}{4} \frac{1}{e}(1-(1/4)^{1/\log(\frac{r(b_i-2)}{4})})$.

Since $f:[3,\infty)\to \R$ defined by $f(x):=\log(x)(1-(1/4)^{1/\log(\frac{x}{4})})$ is a decreasing function converging to $\log(4)$ when $x\to\infty$, considering $x=r(b_i-2)$, one can see that it is enough to show the following:
\[k \frac{e 4 (432)^2}{\log (4)} \leq \frac{r(b_i-2)}{\log(r(b_i-2))}.\]

Since $g_k:[3,\infty)\to \R$ defined by $g_k(x):=\frac{x}{\log(x)}-k \frac{e 4 (432)^2}{\log (4)}$ is an increasing function with $g_k(3)<0$ and $\lim_{x\to \infty} g_k(x)=\infty$, there is $r(M-2) \geq 3$ so that $g(r(M-2))>0$ (and we can take the smallest $M \in \N_{\geq 3}$ satisfying that).

Then, for every $b_1\geq M=M(k)$, we have $k (432)^2 \leq \frac{r(b_1-2)}{4} \frac{1}{e}(1-(1/4)^{1/\log(\frac{r(b_1-2)}{4})})$.

If we want to get a more explicit $M$ which would work, we can consider, for example, $r(b_1-2)=x=k \log^{1+\varepsilon}(k)$ (i.e: $M=k \log^{1+\varepsilon}(k)\frac{1}{r}+1$), because $g_k(x)=\frac{k \log^{1+\varepsilon}(k)}{\log(k \log^{1+\varepsilon}(k))}-k \frac{e 4 (432)^2}{\log (4)}$ is positive considering $k$ large enough.
\end{proof}

As a direct Corollary, we have a general result to omit thick  digits sets that may or may not contain $0$.

\begin{corollary}
Let $k\geq 2$ be an integer, $(b_1, \cdots, b_k)$ be a $k$-tuple of natural numbers and $r\in (0,1)$. Let $D^C_1, \cdots , D^C_k$ be sets satisfying:
\begin{itemize}
\item $D^C_i$ does not have consecutive numbers,
\item $\{0, \cdots , b_i-2\}\setminus D^C_i$ is formed by rows of at least $b_i r$ consecutive numbers, and in case $0 \notin D^C_i$ the row of digits containing $0$ is of length at least $b_i r +1$.
\end{itemize}
 Let $M=M(k)\in \N_{\geq 4}$ be the smallest number satisfying that \[g_k(r(M-2)):=\frac{r(M-2)}{\log(r(M-2))}-k \frac{e 4 (432)^2}{\log (4)}>0.\] If $b_i \geq M=M(k)$ for every $1\leq i \leq k$, then there are infinitely many integers whose base $b_1, \cdots, b_k$ expansions omit the digits in the sets $D^C_1, \cdots, D^C_k$ respectively.
\end{corollary}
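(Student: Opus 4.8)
The plan is to deduce the corollary directly from Theorem~\ref{thm: main2}, using the trivial fact that enlarging a missing-digit set only shrinks the associated set of integers avoiding those digits. For each $i$ I would set $\widetilde{M_i}:=D^C_i\cup\{0\}$ and observe that any integer whose base-$b_i$ expansion omits every digit in $\widetilde{M_i}$ a fortiori omits every digit in $D^C_i\subseteq\widetilde{M_i}$. Hence it suffices to produce infinitely many integers whose base-$b_i$ expansion omits $\widetilde{M_i}$ simultaneously for all $i$, and this is exactly the conclusion of Theorem~\ref{thm: main2} applied to the tuple $(\widetilde{M_1},\dots,\widetilde{M_k})$ with the same $r$, \emph{provided} the hypotheses of that theorem hold for the $\widetilde{M_i}$.

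The content of the argument is the routine verification that these hypotheses transfer. If $0\in D^C_i$ then $\widetilde{M_i}=D^C_i$ and there is nothing to do, since the conditions the corollary imposes on $D^C_i$ are then precisely those Theorem~\ref{thm: main2} imposes on $M_i$ (we read both statements as also requiring $b_i-1\notin D^C_i$, so that $b_i-1$ is a usable digit and the convex hull of $\mathcal{C}_{b_i,D_i}$ reaches $1$). So suppose $0\notin D^C_i$, and let $\{0,1,\dots,s_i\}$ be the row of digits in $D_i$ containing $0$, of length $s_i+1\ge b_ir+1$. Then $0\in\widetilde{M_i}$ and $\widetilde{M_i}\subseteq\{0,\dots,b_i-2\}$ are immediate; $\widetilde{M_i}$ has no two consecutive integers, because $s_i+1\ge 2$ forces $1\notin D^C_i$, so adjoining $0$ to the already consecutive-free set $D^C_i$ keeps it consecutive-free; and $\{0,\dots,b_i-1\}\setminus\widetilde{M_i}=D_i\setminus\{0\}$, where $D_i:=\{0,\dots,b_i-1\}\setminus D^C_i$, is a union of rows of length $\ge b_ir$ --- namely the rows of $D_i$ not containing $0$, which are unchanged and hence $\ge b_ir$ by hypothesis, together with $\{1,\dots,s_i\}$, of length $s_i\ge b_ir$. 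The passage from $s_i+1\ge b_ir+1$ to $s_i\ge b_ir$ is exactly what the strengthened ``$+1$'' in the hypothesis on the $0$-row is for, and it is the only delicate point in the whole reduction.

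With these checks done, Theorem~\ref{thm: main2} --- with the same gauge function $g_k$ and the same threshold $M=M(k)\in\N_{\ge 4}$ --- applies to $(\widetilde{M_1},\dots,\widetilde{M_k})$, and since $b_i\ge M$ for all $i$ it produces infinitely many integers whose base-$b_i$ expansions omit $\widetilde{M_i}\supseteq D^C_i$ for every $i$, which is the assertion. No new geometry enters: the thickness estimates for $\mathcal{C}_{b_i,D_i}$, the equidistribution of the torus rotation by $\vv{w}$, the construction and rightward truncation of the tuples $(B_{1,n},\dots,B_{k,n})$, and the application of Theorem~\ref{thm: FY} are all inherited verbatim from the proof of Theorem~\ref{thm: main2}. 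The main (minor) obstacle is simply the bookkeeping above --- confirming that ``no two consecutive missing digits'' and ``usable digits come in blocks of length $\ge b_ir$'' both survive throwing $0$ into the missing set --- and, as noted, this is precisely what the extra ``$+1$'' guarantees; one should also keep the same implicit lower bound on $r$ as in Theorem~\ref{thm: main2}, so that $\tau(\mathcal{C}_{b_i,D_i})\ge r(b_i-2)$.
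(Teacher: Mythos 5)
Your proposal is exactly the paper's argument: set $\widetilde{M_i}:=D^C_i\cup\{0\}$, apply Theorem~\ref{thm: main2} to obtain that $\bigcap_i A_{b_i,\widetilde{M_i}}$ is unbounded, and conclude by the inclusion $A_{b_i,\widetilde{M_i}}\subseteq A_{b_i,D^C_i}$. You go further than the paper's terse ``one can see this'' by actually checking that the hypotheses transfer --- in particular that adjoining $0$ keeps the missing set consecutive-free and that the extra ``$+1$'' in the $0$-row hypothesis is what preserves the $\ge b_i r$ row-length bound --- which is a welcome elaboration but not a different route.
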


One can see this by considering $\tilde{D^C_i}:=D^C_i \cup \{0\}$ and applying the previous Theorem.
Since $A_{b_i,\tilde{D_i}} \subseteq A_{b_i,D_i}$, then $\bigcap_{1\leq i \leq k} A_{b_i,\tilde{D_i}} \subseteq \bigcap_{1\leq i \leq k} A_{b_i,D_i}$ and the Theorem ensures that $\bigcap_{1\leq i \leq k} A_{b_i,\tilde{D_i}}$ is unbounded, we have that $\bigcap_{1\leq i \leq k} A_{b_i,D_i}$ is unbounded as well, so there are infinitely many integers whose base $b_1, \cdots, b_k$ expansions omit the digits in the sets $D^C_1, \cdots, D^C_k$ respectively.

\section{Missing digits sets with respect to algebraic bases}\label{sec: algebraic}
Our method can be generalised to consider various missing-digit sets in an algebraic setting. We will use Theorem \ref{thm: FY}. The method will be roughly the same as in the proofs of our previous theorems. The most difficult part is perhaps the introduction of the context. Given this fact, the actual proofs in this section are first given for specific examples. After that, it is more or less straightforward to obtain the general proof. We will indicate only major modifications.

We discuss the algebraic setting. Let $\B{K}$ be a number field. Let $O_{\B{K}}$ be its ring of integers. Assume that $O_{\B{K}}$ is an Euclidean domain. For each non-unit $b\in O_{\B{K}}$, we can define the base expansion w.r.t to $b$. Consider the finite group $O_{\B{K}}/bO_{\B{K}}.$ We can choose any representation $D_b$ of this finite group in $O_{\B{K}}.$ After that, for each integer $z\in O_{\B{K}},$ we can perform the Euclidean algorithm and obtain integers $z'\in O_{\B{K}},r\in D_b,$
\[
z=z'b+r.
\]
This step does not need the fact that $O_{\B{K}}$ is Euclidean. What we need is that, for some choice of $D_b$, we can continue to perform the above arithmetic and after at most finitely many steps, we will arrive at a unit $u$. Any choice of such $D_b$ can be regarded as digits in base $b$ expansion. The above iterated arithmetic gives us a representation
\[
z=ub^k+\sum_{j=0}^{k-1}r_j b^j
\]
for a unit $u,$ a rational integer $k$ and digits $r_0,\dots,r_{k-1}.$ Here, $u$ may not be in $D_b$. We require that $k$ is the first time this unit $u$ appears. We can assume that $1\in D_b$ and consider the set of integers $Z_b$ with $u=1$ in the above expansion. Notice that $Z_b$ may not be the whole integer set. This is already the case when $\B{K}=\B{Q}$, $b\geq 2,$ $D_b=\{0,1,\dots,b-1\}$ and then $Z_b$ are non-negative integers. A slightly more non-trivial example is when $\B{K}=\B{Q}[i]$, $b=1+i$ and $D_b=\{0,1\}.$ In this case $Z_b$ is a subset of $O_{\B{K}}$. The integers with at most $15$ digits are plotted in Figure \ref{fig: 15digits}.
\begin{figure}[h]
\includegraphics[width=5cm]{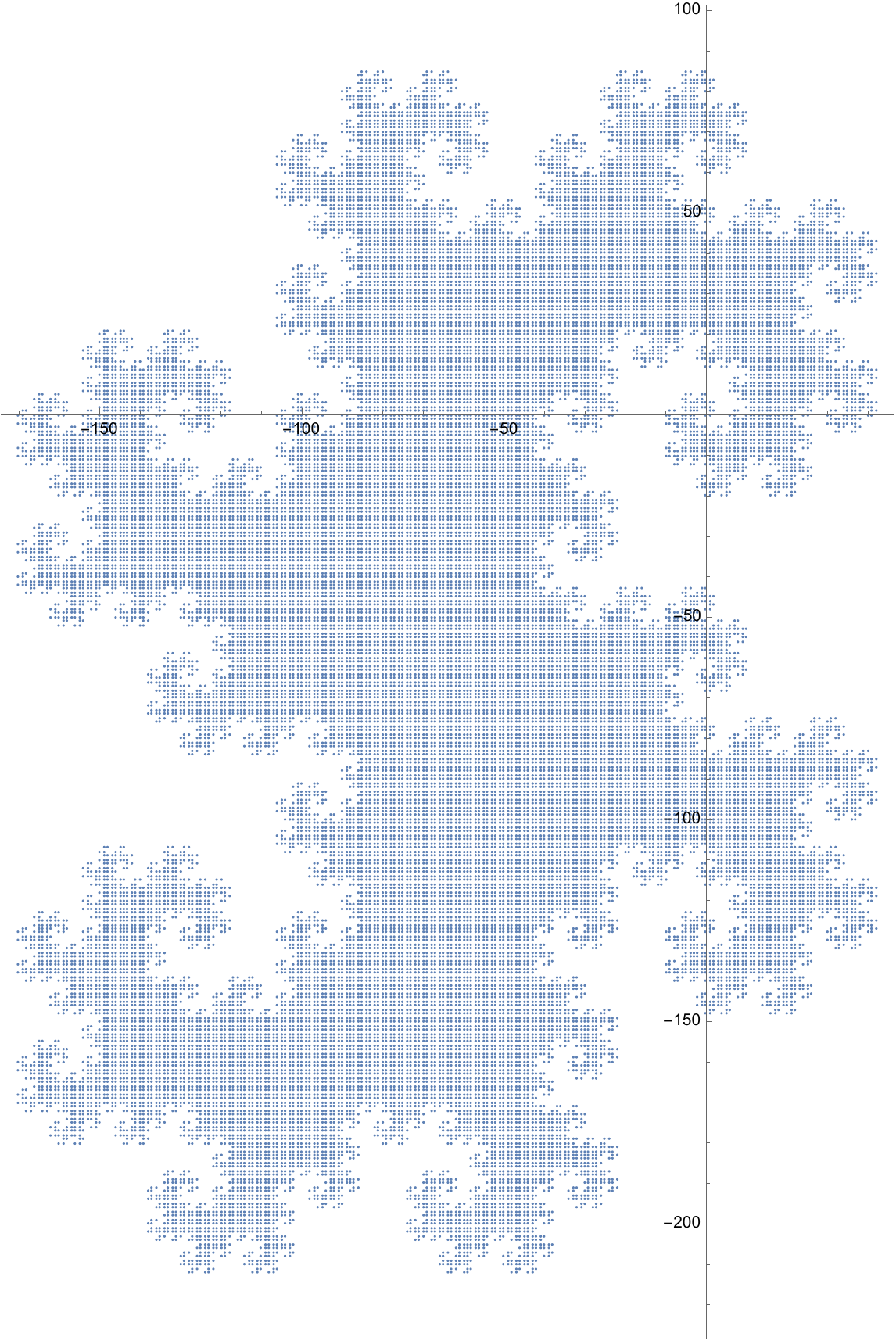}
\caption{Integers in $\mathbb{Z}[i]$ in base $b=1+i$, $D_b=\{0,1\}$ with at most 15 digits.}
\label{fig: 15digits}
\end{figure}
Next, instead of integers, we can also consider numbers of the form
\[
z=\sum_{j< 0}r_j b^j.
\]
They may not be in $\B{K}$ unless the expansion is eventually periodic. However, we can interpret them as elements in the complete field $\B{K}_\infty.$ We can now choose a subset $D\subset D_b$ and consider
\[
\sum_{j=0}^{k-1}r_j b^j, \sum_{j< 0}r_j b^j
\]
for $r_j\in D$. The latter set is naturally denoted as $\mathcal{C}_{b,D}$ which is consistent with our notation for the case $\B{K}=\B{Q}.$ Similarly, as in the Proof of Theorem \ref{thm: main2}, we can also construct missing-digit fractals $\mathcal{C}_{b,D,j}$ and  $A_{b,D}.$  For example, if $b=1+2i$, we have $D_b=\{0,1,2,1+i,2+i\}.$ In this case, the geometry of $C_{b,D_b}$ is as in Figure \ref{fig: level6}. The set $C_{b,D_b}$ looks quite irregular. However, we claim that under the canonical quotient $\mathbb{C}/\mathbb{Z}[i]$(or $\mathbb{R}^2/\mathbb{Z}^2$), $C_{b,D_b}$ is a fundamental domain. This reflects the fact that $D_b$ is a full representation of $O_\B{K}/bO_\B{K}$. See Firgure \ref{fig: level6}.
\begin{figure}
\begin{subfigure}{0.49\linewidth}
\includegraphics[width=\linewidth]{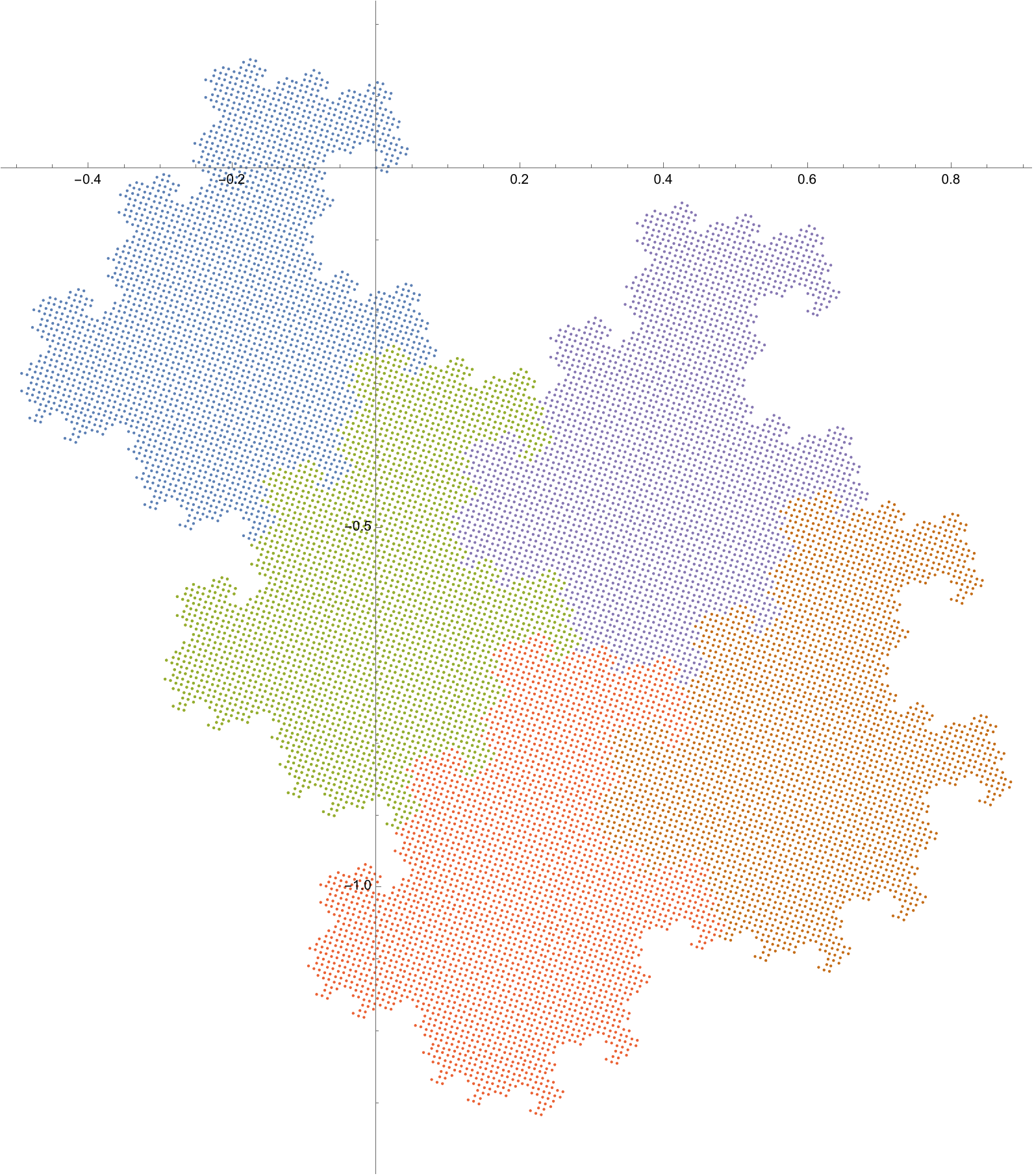}
\caption{$C_{b,D_b}$ for $b=1+2i$. Different digits are colored differently.}
\label{fig: f1}
\end{subfigure}
\begin{subfigure}{0.49\linewidth}
\includegraphics[width=\linewidth]{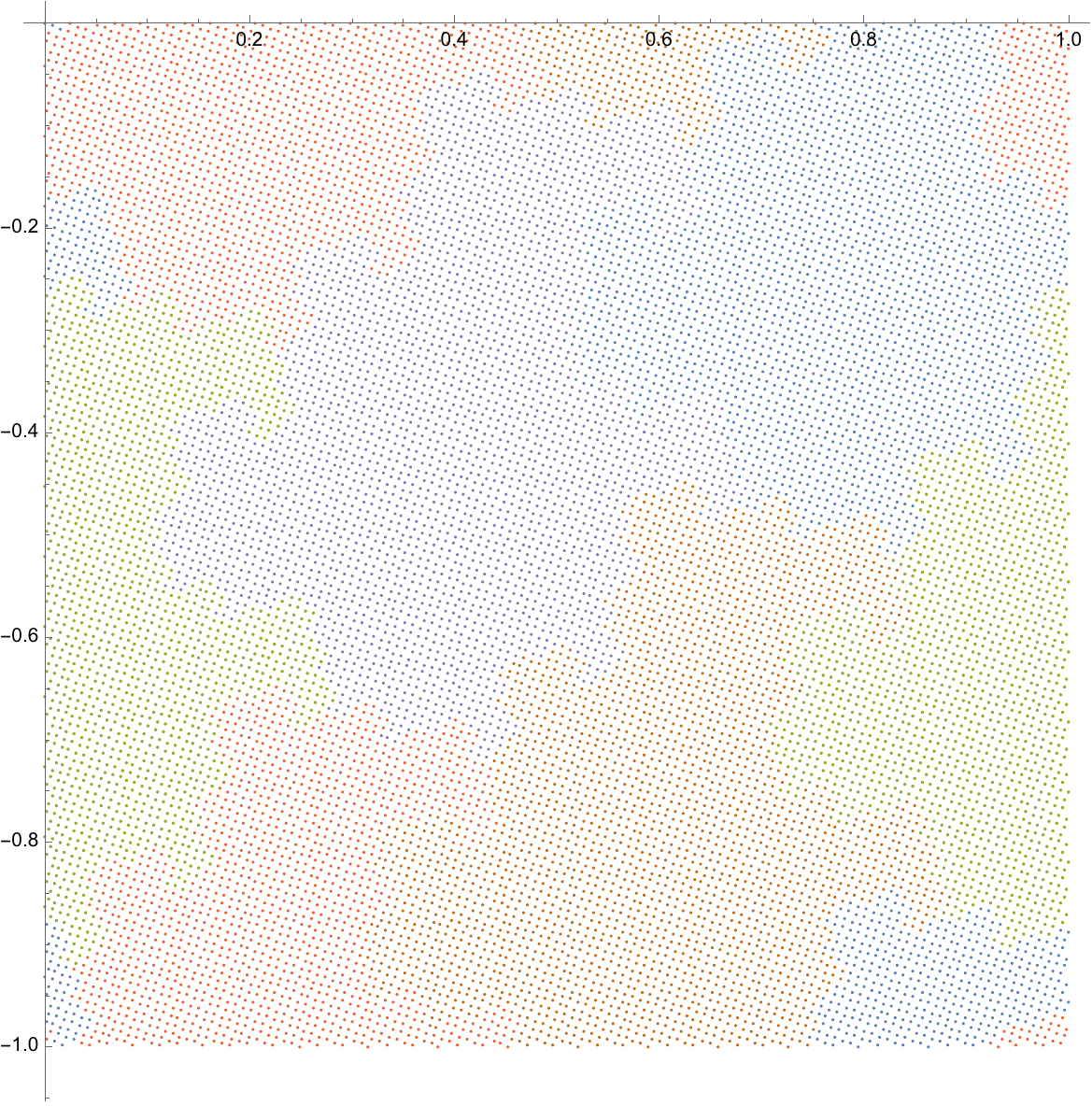}
\caption{$C_{b,D_b}$ is a fundamental domain of $\mathbb{C}/\mathbb{Z}[i]$.}
\label{fig: f2}
\end{subfigure}
\caption{}
\label{fig: level6}
\end{figure}

We now focus on the special case when $\B{K}=\B{Q}[i].$ We can naturally regard $\B{Q}[i]$ as a subset of $\mathbb{R}^2.$\footnote{More generally, we can use the Galois embeddings to regard $\B{K}\subset\mathbb{R}^s\times\mathbb{C}^t$ for suitable integers $s,t.$} Let $b\subset\mathbb{Z}[i]=O_{\mathbb{Q}[i]}$ be a non-unit integer. Observe that $b\mathbb{Z}[i]\subset \mathbb{Z}[i]$ is a sub-lattice of co-volume $N(b)$ where $N(.)$ is the norm of the number field $\B{Q}[i].$ In our case, it coincides with $|b|^2$ where $|.|$ is the natural Euclidean norm in $\mathbb{C}$ or $\mathbb{R}^2.$ Observe that if $N(b)$ is large enough, the geometric shape of a fundamental domain $b\B{Z}[i]$ can be made to be a very large square.\footnote{There are different choices of fundamental domains conjugated with each other via $SL_2(\mathbb{Z})$. There is one which is a square.} In this fundamental domain, there are exactly $N(b)$ many elements in $\B{Z}[i]$. We can take those elements as $D_b.$ The following result captures the thickness of missing-digit fractals originating from base $b$ expansions with one missing digit. See Figure \ref{fig: missingone} for $b=1+2i$ where we plotted $bC_{b,D}$ for all possible $D$ with one missing digit.

\begin{figure}
	\centering
	\begin{subfigure}{0.32\linewidth}
		\includegraphics[width=\linewidth]{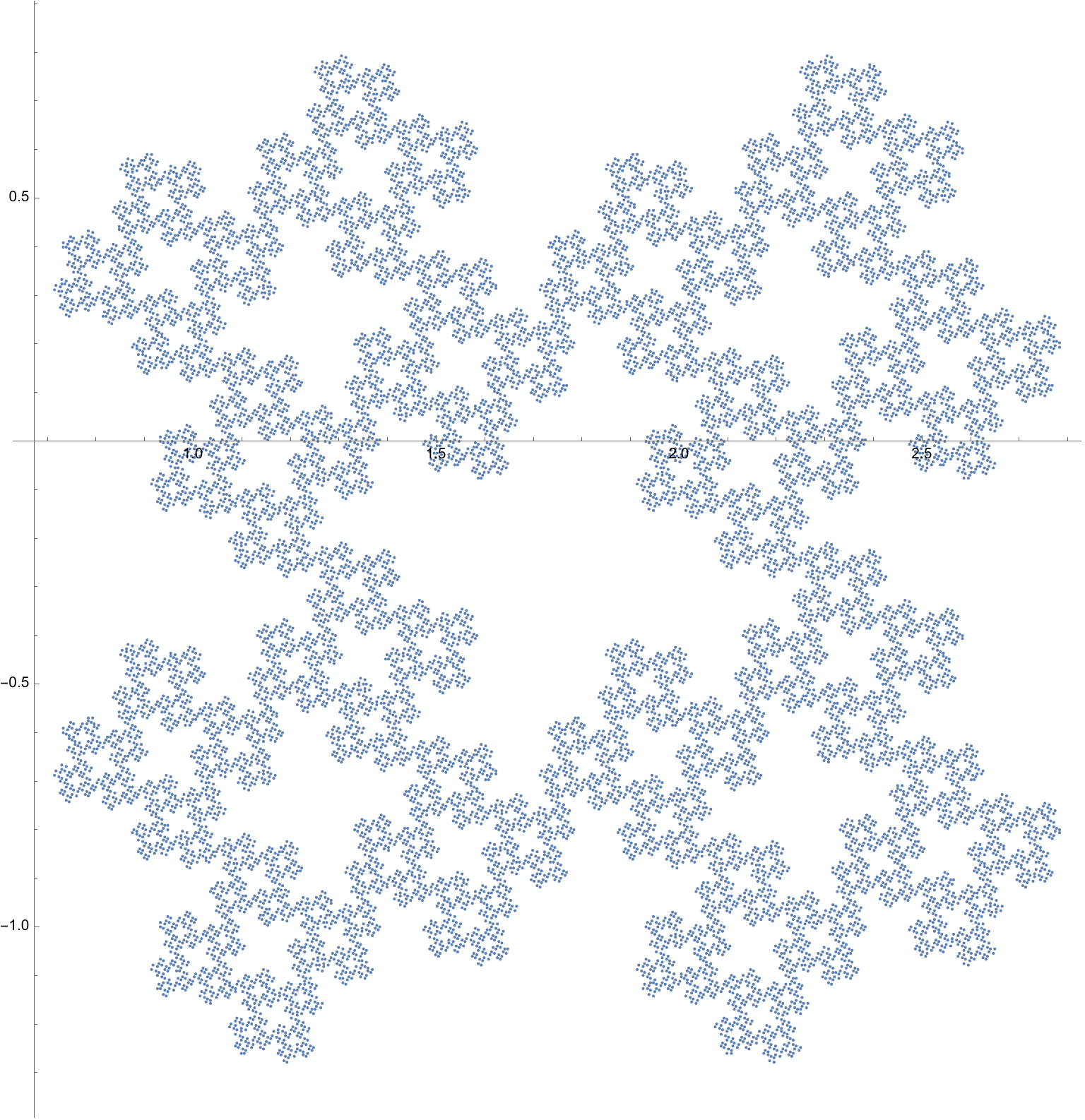}
		\caption{Missing $0$}
		\label{fig:subfigA}
	\end{subfigure}
	\begin{subfigure}{0.32\linewidth}
		\includegraphics[width=\linewidth]{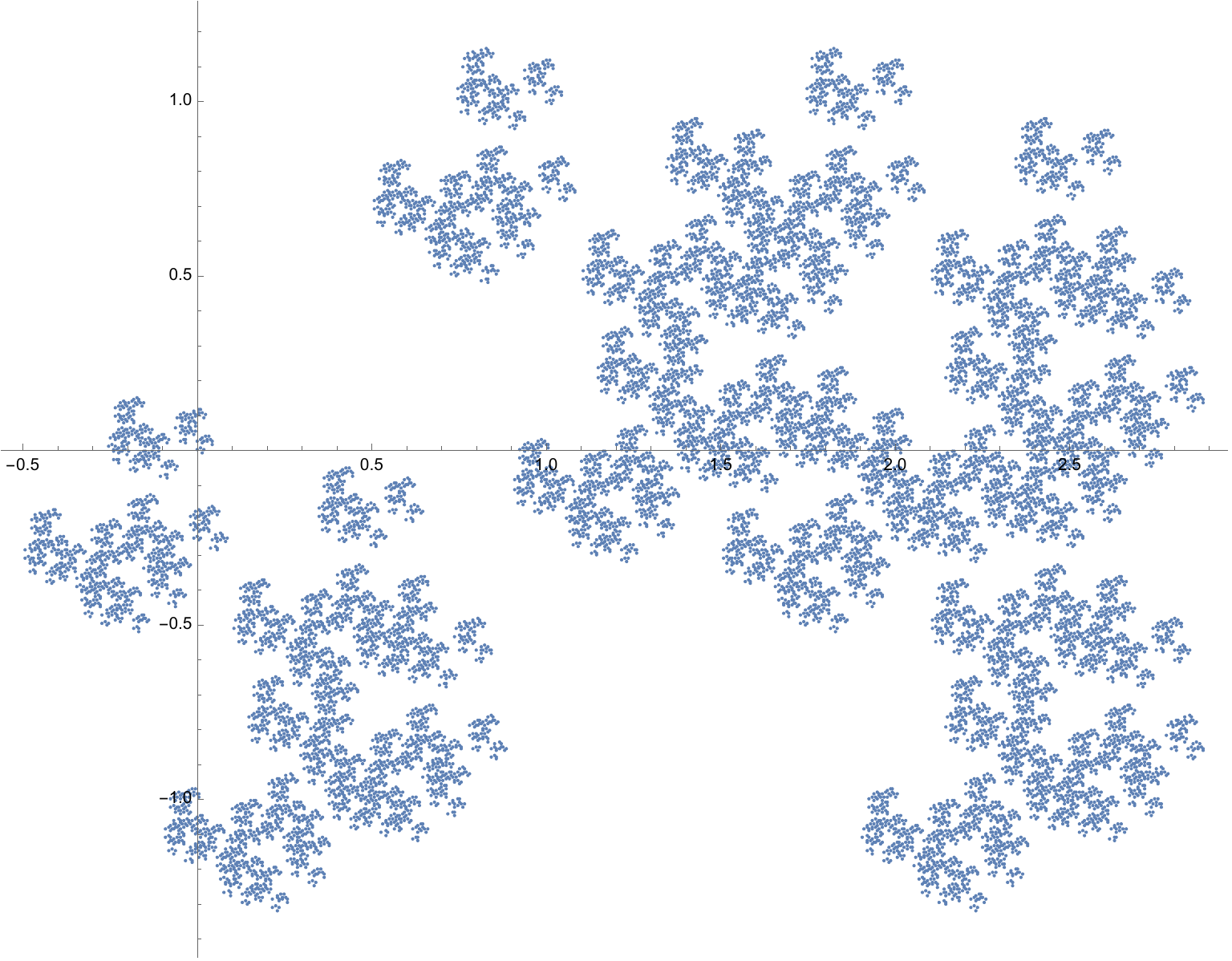}
		\caption{Missing $1$}
		\label{fig:subfigB}
	\end{subfigure}
	\begin{subfigure}{0.32\linewidth}
	        \includegraphics[width=\linewidth]{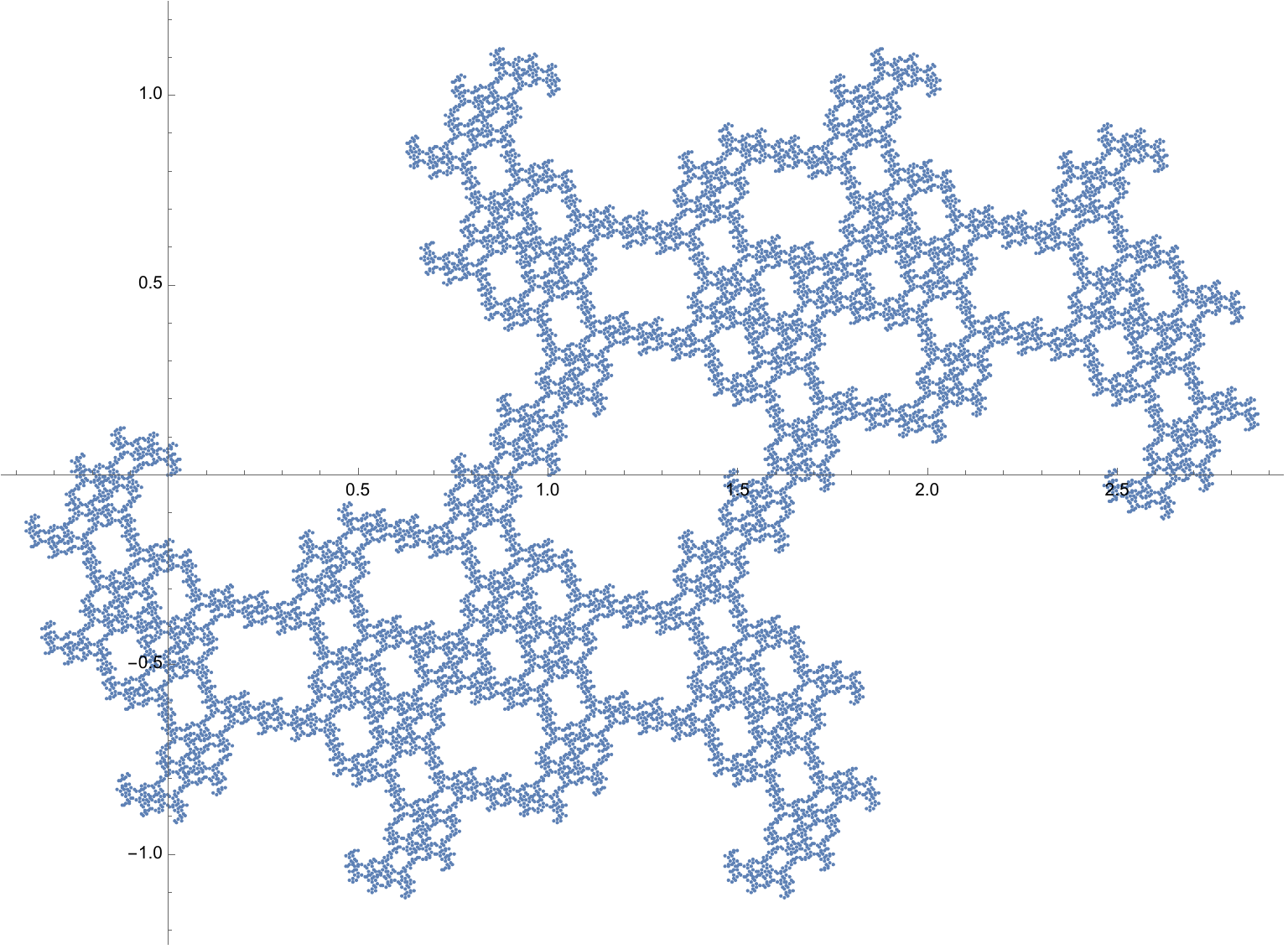}
	        \caption{Missing $2$}
	        \label{fig:subfigC}
         \end{subfigure}
         \begin{subfigure}{0.32\linewidth}
	        \includegraphics[width=\linewidth]{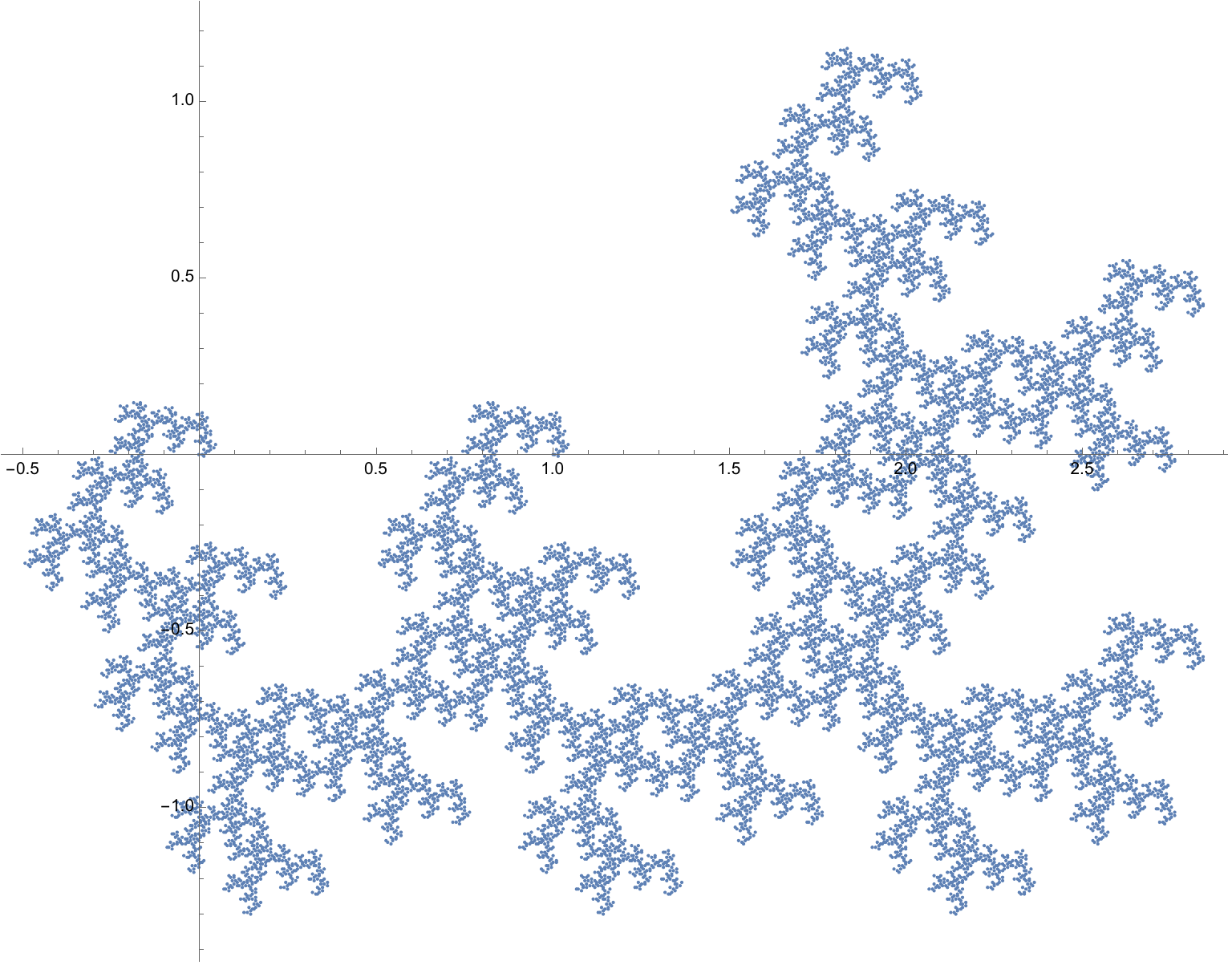}
	        \caption{Missing $1+i$}
	        \label{fig:subfigC}
         \end{subfigure}
         \begin{subfigure}{0.32\linewidth}
	        \includegraphics[width=\linewidth]{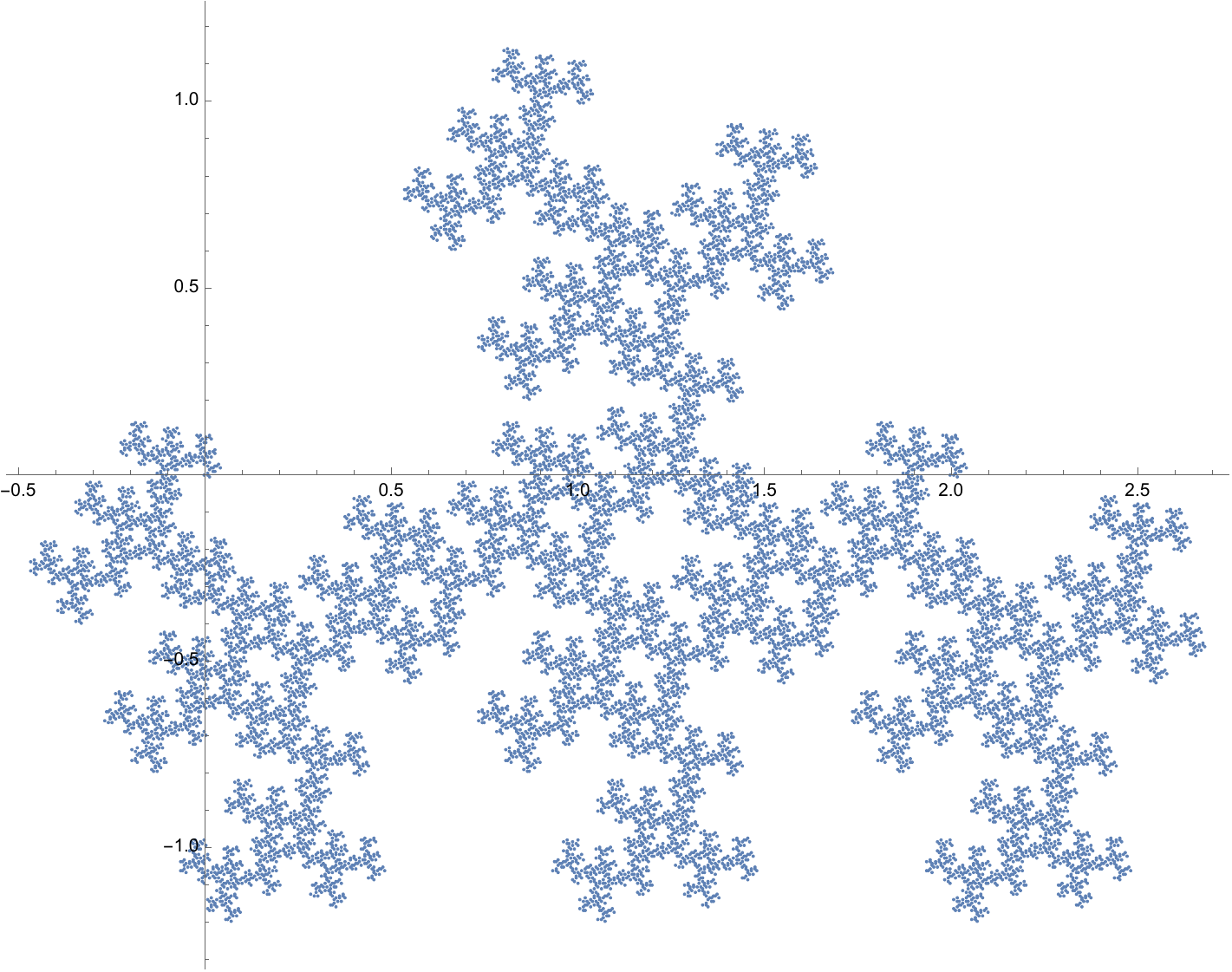}
	        \caption{Missing $2+i$}
	        \label{fig:subfigC}
         \end{subfigure}
	\caption{Base $b=1+2i$ with all possible one missing digit.}
	\label{fig: missingone}
\end{figure}

\begin{theorem}
    Fix $T\in (0,\infty).$ There is some $M>0$ such that all $C_{b,D}$ with $\#D=N(b)-1$ has thickness at least $T$ as long as $N(b)$ is large enough.
\end{theorem}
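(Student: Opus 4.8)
The plan is to exploit the self-similar structure of $C_{b,D}$ together with elementary Newhouse-thickness bookkeeping, reducing everything to the geometry of a single ``full-digit'' tile.

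\textbf{Setting up the tile.} Writing $\phi_r(z)=(z+r)/b$ for $r\in D_b$, each $\phi_r$ is a similarity of $\R^2$ of ratio $1/\sqrt{N(b)}$ (a scaling composed with the rotation by $-\arg b$). The full-digit set $F:=C_{b,D_b}$ is the attractor of $\{\phi_r\}_{r\in D_b}$, and it satisfies $bF=\bigcup_{r\in D_b}(r+F)$ with the pieces overlapping only in measure zero, so $F$ tiles $\R^2$ under $\mathbb{Z}[i]$. From the inclusion $F\subseteq\bigcup_{r\in D_b}\phi_r(B)$, valid for any ball $B\supseteq F$, one gets $\diam F\le c_0$ for an absolute constant $c_0$ once $N(b)$ is bounded below; feeding this back into $bF=\bigcup_{r\in D_b}(r+F)$ and comparing with $\bigcup_{r\in D_b}(r+[0,1)^2)$, which is exactly the square fundamental domain $Q$ of $b\mathbb{Z}[i]$ used to define $D_b$, shows that $F$ is within Hausdorff distance $\le c_0/\sqrt{N(b)}$ of a fixed unit square $Q_1$. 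Since $C_{b,D}$ is obtained from $F$ by deleting one first-level tile, its convex hull is likewise within $O(1/\sqrt{N(b)})$ of $Q_1$; this is what would make hypotheses (1)--(2) of Theorem \ref{thm: FY} easy to arrange in the eventual application.

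\textbf{The gap hierarchy.} Put $K:=C_{b,D}$ with $D=D_b\setminus\{d_0\}$. Since $bK=\bigcup_{r\in D}(r+K)$ and $K\subseteq F$, the difference $F\setminus K$ is, apart from the ``frill'' $\mathrm{conv}(F)\setminus F$, the disjoint union of the tiles $\phi_w(\phi_{d_0}(F))$ over finite words $w$ in the alphabet $D$, with $\diam\phi_w(\phi_{d_0}(F))=c_0\,N(b)^{-(|w|+1)/2}$; moreover the frill lies in the $O(1/\sqrt{N(b)})$-neighbourhood of $\partial F$ and hence in the unbounded complementary component $E$. By self-similarity it suffices to control the ratio $\dist(G,(\bigcup\text{larger gaps})\cup E)/\diam(G)$ for the largest gaps: any gap $G=\phi_w(\phi_{d_0}(F))$ with $|w|=m$ lies inside the tile $\phi_w(F)$, and every gap of diameter $\ge\diam G$ as well as every nearby point of $E$ lies outside $\phi_w(F)$ --- unless $\phi_w(F)$ itself meets $\partial F$, in which case $G$ is connected to $E$ and is not a bounded gap at all --- so $\dist(G,\cdot)\ge\dist(G,\partial\phi_w(F))=N(b)^{-m/2}\dist(\phi_{d_0}(F),\partial F)$, and the ratio is at least $\dist(\phi_{d_0}(F),\partial F)\sqrt{N(b)}/c_0$, uniformly in $m$.

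\textbf{The main step and the main obstacle.} It remains to prove the first-generation estimate: for every choice of missing digit $d_0$, either the tile $\phi_{d_0}(F)$ (and then every $\phi_w(\phi_{d_0}(F))$) is absorbed into $E$, or it is a genuine bounded gap lying at distance $\gg 1/\sqrt{N(b)}$ from $\partial F$, so that in the latter case the ratio above tends to infinity with $N(b)$. Here the interaction between the choice of $D_b$ (lattice points in a square fundamental domain) and the fractal geometry of the tile $F$ is essential, and this is the delicate point: one must control, uniformly over $d_0$, how a deleted first-level tile is separated from the unbounded component by the remaining part of $K$, in particular handling missing digits $d_0$ that sit just barely inside the ``boundary layer'' of $D_b$. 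I expect this case analysis to be the crux of the argument; once it is settled, the self-similar propagation of the previous paragraph yields $\tau(C_{b,D})\ge c\sqrt{N(b)}\ge T$ as soon as $N(b)$ is large, uniformly over all $D$ with $\#D=N(b)-1$.
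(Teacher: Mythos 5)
You correctly set up the self-similar gap hierarchy (the removed pieces are $\phi_w(\phi_{d_0}(F))$ for words $w$ over $D$), reduce the thickness estimate to a first-level computation, and observe that both $\dist$ and $\diam$ scale by $N(b)^{-|w|/2}$, so the ratio is level-independent. This is the same underlying mechanism the paper's short proof relies on: $C_{b,D_b}$ is Hausdorff-close to a unit square, $C_{b,D}$ is obtained by deleting one first-level tile, and the set is self-similar with ratio $N(b)^{-1/2}$.

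However, you explicitly defer the step you yourself flag as the crux, and that deferral is a genuine gap. You claim the dichotomy: either $\phi_{d_0}(F)$ is absorbed into $E$, or it is a bounded gap with $\dist(\phi_{d_0}(F),\partial F)\gg N(b)^{-1/2}$. This dichotomy does not follow from the scaling bound you give. For a missing digit $d_0$ sitting one or two lattice steps inside the boundary of the digit square (e.g.\ $d_0=1$ when $b$ is a large rational prime, so the digit square is $\{0,\dots,p-1\}^2$), one has $\dist(\phi_{d_0}(F),\partial F)=O(N(b)^{-1/2})$, so the first-level ratio you compute is $O(1)$ and does not grow with $N(b)$. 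To salvage the conclusion one must analyse how the removed tiles $\phi_w(\phi_{d_0}(F))$ merge with each other and with $E$ along the fractal boundary $\partial F$ and its scaled copies: a removed tile touching $\partial\phi_w(F)$ may join a chain of removed tiles across many levels, changing both which components are bounded gaps and what their diameters and separations are. You acknowledge this boundary-layer case must be handled but do not handle it, so the proof is incomplete. To be fair, the paper's own proof of this statement is a single sentence of the form ``from here we see that the thickness is at least $c_d N^{1/2}(b)$'' and also does not address this case explicitly; your analysis is more detailed and usefully isolates exactly where the delicacy lies, but as written neither argument closes the boundary-layer case.
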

\begin{remark}
    An illustrative example is the Sierpinski Carpet based in a large integer $p.$ It is constructed as $b=p$ with one missing digit.
\end{remark}
\begin{proof}
    First, if $N(b)$ is large enough, the set $C_{b,D_b}$ is very close to being a full square. In fact, consider the square $S_b$ with side length one and rotated according to $b^{-1}$ (but not scaled). It can be checked that the convex hull of $C_{b,D_b}$ is of distance $O(1/N^{1/2}(b))$ to $S_b$ in terms of Hausdorff the metric. With one missing digit, $C_{b,D}$ is $1/N^{1/2}(b)$-close to $C_{b,D_b}.$ Notice that $C_{b,D}$ is self-similar whose contraction ratio is $1/N^{1/2}(b)$. From here, we see that the thickness of $C_{b,D}$ is at least
    $
    c_d N^{1/2}(b)
    $
    for some constant $c_d>0.$
\end{proof}

The thickness estimate holds for all $C_{b,D,j}$ because of the self-similarity. However, we cannot directly use Theorem \ref{thm: FY} to obtain intersections between different missing digit sets without checking the conditions (2,3). 

Consider different integers $b_1,\dots,b_k\in\mathbb{Z}[i]$ with large enough norms so that Theorem \ref{thm: FY} applies. Consider the logarithmic map
\[
\log: a\in \mathbb{Z}[i] \to (\log |a|, \arg a)\in (0,\infty)\times \B{T}.
\]
The first component is the standard $\log$ for positive numbers. The second component is the argument of $a$ as a complex number.\footnote{In general, such a logarithmic map can be defined in each Galois embedding individually.} For simplicity, let us first look at the two integers $b_1,b_2.$ We claim that there are infinitely many positive rational integer pairs $(l_1,l_2)$ such that $b^{l_1}_1/b^{l_2}_2$ is close to one. In fact, consider the set \[P=\{b^{l_1}_1/b^{l_2}_2: (l_1,l_2)\in\mathbb{Z}^2_{>0}\}\subset\mathbb{C}.\] It is possible to see that $\overline{P}\cap S^1$ is not empty. However, it might be the case that $1\not\in \overline{P}.$ To see that this is not the case, observe that if $z\in \overline{P}\cap S^1,$ then $z^q\in \overline{P}\cap S^1$ for all $q\in\mathbb{Z}_{>0}.$ From here, and the fact that
\[
1\in \overline{\{z^q\}},
\]
we conclude the claim. 

Let $D_j\in D_{b_j}$ for $j\in \{1,\dots,k\}$ be digit sets with one missing digit in bases $b_1,\dots,b_k.$ This argument can be applied to $b_1,\dots,b_k$ and as a result, we see that there are large rational integers $l_1,l_2,\dots,l_k$ so that the convex fulls of 
\[
b^{l_1}_1C_{b_1,D_1},\dots,b^{l_k}_1C_{b_k,D_k}
\]
are almost the same. More precisely, recall that they are all close to some full squares. Those squares are basically the same square in the sense that the Hausdorff distance between them is much smaller than the diameters of those squares. This validates conditions (2,3) in Theorem \ref{thm: FY}.

We can now apply the argument in the proof of Theorem \ref{thm: main1}. As a result, we see that there are infinitely many complex numbers $z$ with the following property that
\[
z\in \cap_{j} A_{b_j,D_j}.
\]

Moreover, $|z|$ can be arbitrarily large. It is tempting to take the integer part of $z$ and claim that the `integer part' $[z]$ has digits in $D_j$ in base $b_j.$ However, due to our choice of the fundamental domain $C_{b_j,D_{b_j}}$, which is different for different $j$, the notion of taking the integer part is not the same for different $j$.\footnote{For $\B{K}=\mathbb{Q}$, for any $b>1$, the corresponding fundamental domain can be chosen to be the unit interval. There is no ambiguity in taking the integer part for any positive number.} For example, the information that $z\in A_{b_1,D_1}$ tells that 
\[
z=\sum_{l<0}r_lb^l_1+\sum_{l=0}^k r_l b^l_1
\]
with all $r_l\in D_1$. The integer part of $z$ with respect to the base $b_1$ is then
\[
z_1=\sum_{l=0}^k r_l b^l_1.
\]
Similarly, for $A_{b_2,D_2},$ we see that the integer part of $z$ with respect to the base $b_2$ is (for some integer $k'$ and digits $r_l\in D_2$)
\[
z_2=\sum_{l=0}^{k'} r'_l b^l_2.
\]
Thus we found two possibly different integers $z_1,z_2$. There is a finite set $B$ (depending on the field $\B{K}$) so that $z_1-z_2\in B$. Thus, although $z_1,z_2$ may be different, their difference can only have finitely many possible values (including zero). From here, we deduce the following result.
\begin{theorem}
    For each $k\geq 2,$ there is a number $M>0$ such that for each $k$ different integers in $\mathbb{Z}[i]$, say, $b_1,\dots,b_k$ with norm at least $M$, for each given choice of digit sets $D_i\subset D_{b_j}$ with one missing digit, there is a finite set $B$ such that there are infinitely many integers $z\in\mathbb{Z}[i]$ with the property that $z+s_j$ has digits in $D_j$ and $s_j\in B$ for all $j$. 
\end{theorem}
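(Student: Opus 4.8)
The proof will follow the blueprint already laid out in the discussion preceding the statement, so the plan is mostly to assemble the pieces in the right order. First I would fix $k$, and let $T$ be a large thickness parameter to be chosen: by the thickness theorem for $\mathcal{C}_{b,D}$ (the one with $\#D=N(b)-1$), there is $M_0>0$ so that whenever $N(b_j)\geq M_0$, every $\mathcal{C}_{b_j,D_j}$ — and hence every dilate $b_j^{l}\,\mathcal{C}_{b_j,D_j}$, by self-similarity — has thickness at least $c_d N^{1/2}(b_j)$. I would then choose $M\geq M_0$ large enough that $\sum_{j=1}^k \tau_j^{-c}$ can be made to satisfy condition (3) of Theorem~\ref{thm: FY} in $\mathbb{R}^2$ with $c\in(0,2)$, exactly as the constant juggling in the proof of Theorem~\ref{thm: main1} does (take $c=1-1/\log(\cdot)$, use monotonicity of the auxiliary function), but now with $d=2$ and the square $\beta$ bounded away from zero as explained below.

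Next I would set up the combinatorial/dynamical ingredient. Via the logarithmic map $a\mapsto(\log|a|,\arg a)$ on $\mathbb{Z}[i]$, the vectors $\log b_1,\dots,\log b_k$ live in $(0,\infty)\times\mathbb{T}$; I want large positive rational integers $l_1,\dots,l_k$ with $l_1\log b_1,\dots,l_k\log b_k$ all close to one another, i.e. the $k$ dilated convex hulls $b_j^{l_j}\mathcal{C}_{b_j,D_j}$ are all close in Hausdorff metric to one common large square $S$ (side length $\approx b_1^{l_1}$, say). The existence of such $l_j$ is the argument already sketched: the set $P=\{b_1^{l_1}/b_2^{l_2}\}$ has $\overline P\cap S^1\neq\emptyset$, and if $z\in\overline P\cap S^1$ then $z^q\in\overline P$ for all $q$, so $1\in\overline{\{z^q\}}\subset\overline P$; iterating this over all pairs (or working directly in the torus $\mathbb{T}^{k-1}$ with the irrational-rotation recurrence as in Theorem~\ref{thm: main1}) gives infinitely many tuples $(l_1,\dots,l_k)$ making all the $\arg$-components, and after the reduction also the moduli, nearly equal. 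Since each convex hull is $O(N^{-1/2}(b_j))$-close to the rotated-and-scaled unit square and these squares nearly coincide, conditions (1) and (2) of Theorem~\ref{thm: FY} hold with a uniform ball $B\subset\bigcap_j\mathrm{conv}(b_j^{l_j}\mathcal{C}_{b_j,D_j})$ whose diameter is a fixed fraction of $\sup_j\diam$, so $\beta$ is bounded below by a constant (essentially $\tfrac14$), uniformly in the choice of $l_j$.

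With all three hypotheses of Theorem~\ref{thm: FY} verified, I conclude $\dim_H\big(B\cap\bigcap_j b_j^{l_j}\mathcal{C}_{b_j,D_j}\big)>0$, in particular this intersection is nonempty; mimicking the proof of Theorem~\ref{thm: main1} (truncating each missing-digit fractal at a first-level gap associated to the missing digit, so thickness does not drop, and letting $l_1\to\infty$ along the recurrence sequence) we get points $z$ with $|z|$ arbitrarily large in $\bigcap_j A_{b_j,D_j}$. Finally I would address the ``integer part'' subtlety: for each $j$, $z\in A_{b_j,D_j}$ yields an integer part $z_j=\sum_{l=0}^{k_j} r_l b_j^{l}$ with digits in $D_j$, but $z-z_j$ lies in the compact fundamental-domain-type set $\mathcal{C}_{b_j,D_{b_j}}$ translated appropriately, so $z_j-z_{j'}$ lies in a fixed finite set $B$ of $\mathbb{Z}[i]$ (the difference of two bounded integers, bounded independently of $z$); setting $s_j:=z_j-z_1\in B$ (or comparing all to $z$) gives the stated conclusion: infinitely many integers $z$ (namely $z_1$, renamed) with $z+s_j$ having digits in $D_j$ and $s_j\in B$ for all $j$.

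The main obstacle is the geometric bookkeeping in $\mathbb{R}^2$ rather than $\mathbb{R}$: one must check that the convex hull of $\mathcal{C}_{b,D}$ really is $O(N^{-1/2}(b))$-close to a rotated unit square (so that $\beta$ stays bounded below after dilation and the $k$ squares can be made to nearly coincide simultaneously), and that the truncation-at-a-gap trick, which was transparent for intervals on the line, still preserves a lower bound on the two-dimensional thickness $\tau$; these are the steps where the planar geometry of the base-$b$ fundamental domain has to be controlled carefully, and where the ``equal norm Galois embeddings'' hypothesis (here automatic since $\mathbb{Q}[i]$ has one complex place) is what keeps the sets self-similar rather than self-affine.
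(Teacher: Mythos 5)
Your proposal is correct and follows essentially the same route as the paper's: the thickness lower bound $\gtrsim N^{1/2}(b_j)$ from the preceding theorem, the logarithmic recurrence argument (via $\overline P\cap S^1\neq\emptyset$ and $z^q$) to pick exponents $l_j$ making the $b_j^{l_j}$ nearly equal so the dilated convex hulls nearly coincide as squares, then Theorem~\ref{thm: FY} and finally the observation that the base-$b_j$ integer parts of $z$ differ by elements of a fixed finite set $B$. The one small deviation is your appeal to the 1D truncation-at-a-gap trick from Theorem~\ref{thm: main1}: the paper does not truncate here, because once the $b_j^{l_j}$ are chosen close in modulus and argument, the convex hulls of $b_j^{l_j}\mathcal{C}_{b_j,D_j}$ are already nearly coincident squares of comparable diameter (unlike the 1D situation where the right endpoints $b_i b_1^n$ diverged), so conditions (2)--(3) of Theorem~\ref{thm: FY} hold without any cutting; and, as you yourself flag, defining a thickness-preserving truncation in $\mathbb{R}^2$ would require extra care, so it is fortunate the argument does not need it.
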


We now use a trick to rectify the fact that $s_i$ may not be always zero. In fact, if $b$ has a large norm, $D_b$ is the integer lattice $\mathbb{Z}^2$ contained in some large square. We can choose $D\subset D_b$ in a way that all `small' digits are removed from $D_b$. More precisely, for some $\epsilon\in (0,1)$, we remove all points in $D_b$ that are $\epsilon |b|$ close to the corners of the corresponding square. By choosing $\epsilon$ to be close to zero, we can achieve that $C_{b,D}$ has a large thickness. In fact, the thickness $C_{b,D}>\eta(b,\varepsilon)$ for some value $\eta(b,\varepsilon)$ which tends to $\infty$ as the norm of $b\to\infty$ and $\varepsilon\to 0$. Now, from the choice of $D$, we can guarantee that if $z$ has digits only in $D$, then $z+s$ does not have digit zero for all $s\in B.$  This implies that Theorem \ref{thm: FY} applies to systems of different such self-similar sets. From here, we finally proved the following result.

\begin{theorem}
    For each $k\geq 2,$ there is a number $M>0$ such that for each $k$ different integers in $\mathbb{Z}[i]$, say, $b_1,\dots,b_k$ with norm at least $M$, there are infinitely many integers $z\in\mathbb{Z}[i]$ with the property that $z$ misses the digit zero in base $b_j$ for all $j$. 
\end{theorem}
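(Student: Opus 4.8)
The plan is to assemble the ingredients already developed in this section — the thickness estimate for one-missing-digit sets in $\mathbb{R}^2$, the simultaneous approximation of convex hulls via the logarithmic map, Theorem~\ref{thm: FY} with $d=2$, and the finite-difference observation for integer parts — following the same scheme as the proof of Theorem~\ref{thm: main1}. First, for each non-unit $b\in\mathbb{Z}[i]$ with $N(b)$ large I would take the fundamental domain to be a square $S_b$ of side length $|b|$ oriented according to $b^{-1}$ and positioned so that $0$ is a corner lattice point, and let $D_b$ consist of the $N(b)$ points of $\mathbb{Z}[i]$ inside $S_b$. Inside $D_b$ I choose $D\subset D_b$ by deleting every digit within distance $\varepsilon|b|$ of a corner of $S_b$, for a small fixed $\varepsilon\in(0,1)$; in particular $0\notin D$. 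As noted above, $C_{b,D}$ is self-similar with contraction ratio $N(b)^{-1/2}$, its convex hull is within $O(N(b)^{-1/2})$ of the unit-scaled square $b^{-1}S_b$ in the Hausdorff metric, and its thickness is at least a quantity $\eta(b,\varepsilon)$ that tends to $\infty$ as $N(b)\to\infty$ and $\varepsilon\to 0$; the same bounds persist for every $C_{b,D,j}$ by self-similarity. Choosing $M$ large makes all of $\tau(C_{b_1,D_1}),\dots,\tau(C_{b_k,D_k})$ as large as Theorem~\ref{thm: FY} requires.

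Next I would run the logarithmic/torus-rotation argument on $\log: a\mapsto(\log|a|,\arg a)$: using that the closure of $\{b_1^{l_1}/b_j^{l_j}:(l_1,l_j)\in\mathbb{Z}_{>0}^2\}$ meets $S^1$ and hence, via $z\mapsto z^q$ and $1\in\overline{\{z^q\}}$, contains $1$ for each $j$, one obtains infinitely many tuples of large positive rational integers $(l_1,\dots,l_k)$ for which the convex hulls of $b_1^{l_1}C_{b_1,D_1},\dots,b_k^{l_k}C_{b_k,D_k}$ are all Hausdorff-close to one common large square, with error much smaller than the side length, and for which the whole configuration can be pushed off to infinity (its diameter $\to\infty$) exactly as in Theorem~\ref{thm: main1}. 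For each such fixed tuple, Theorem~\ref{thm: FY} applies to the finite family $C_i=b_i^{l_i}C_{b_i,D_i}$: hypothesis (1) is trivial, hypothesis (2) holds because a fixed-shape ball sits in every convex hull so $\beta$ is bounded below, and hypothesis (3) holds because the thicknesses are enormous while $k$ is fixed. This produces a point $z\in\bigcap_j A_{b_j,D_j}$, and by the ``moving to infinity'' feature we may take $|z|$ arbitrarily large.

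Finally I would convert $z$ into a genuine integer. For each $j$, the base-$b_j$ expansion of $z$ has all digits in $D_j$, so its integer part $z_j=\sum_{l\ge 0}r_l b_j^l\in\mathbb{Z}[i]$ also has all digits in $D_j$. The integers $z_1,\dots,z_k$ need not coincide, but as observed above there is a finite set $B\subset\mathbb{Z}[i]$, depending only on the field, with $z_1-z_j\in B$ for every $j$. Since $D_j$ contains no digit within $\varepsilon|b_j|$ of a corner of $S_{b_j}$ — in particular no digit close to $0$ — the corner-removal construction guarantees (this is the trick sketched above) that whenever $w$ has all digits in $D_j$ and $s\in B$, the integer $w+s$ has no zero digit in base $b_j$; taking $w=z_j$ and $s=z_1-z_j$ shows that $z_1$ has no zero digit in base $b_j$ for every $j\ge 2$, while for $j=1$ this is immediate since $0\notin D_1$. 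Letting $|z|\to\infty$ yields infinitely many such integers $z_1$, which proves the theorem.

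I expect the main obstacle to be the two-dimensional thickness bookkeeping: checking that after deleting the corner digits the set $C_{b_j,D_j}$ still has a sufficiently regular gap structure (in the sense of the $\mathbb{R}^d$ definition of thickness above) that its thickness is genuinely of order $\eta(b_j,\varepsilon)\to\infty$, together with verifying that the simultaneous approximation step aligns all $k$ convex hulls uniformly in both modulus and argument. The digit/finite-set bookkeeping for the integer parts is the other delicate point, but it is essentially the argument already given in the discussion preceding the statement.
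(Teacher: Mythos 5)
Your proposal reproduces the paper's argument: the one-missing-digit thickness bound, the logarithmic-map alignment of scaled copies $b_j^{l_j}C_{b_j,D_j}$, the application of Theorem~\ref{thm: FY} in $\mathbb{R}^2$, the observation that the various integer parts $z_1,\dots,z_k$ differ by elements of a finite set $B$, and the corner-removal trick ensuring that $w+s$ has no zero digit whenever $w$ has digits in $D_j$ and $s\in B$. This is exactly the route the paper takes (including inheriting the paper's own level of detail at the carry-bookkeeping step), so the proposal is correct and not a genuinely different approach.
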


Now, we discuss the situation for general $\B{K}.$ We can extend all of the above constructions. However, for some $b\in O_{\B{K}}$, missing digits subsets of $C_{b,D_b}$ may not be self-similar sets. They are self-affine sets, so the thickness method no longer applies. Nonetheless, if $b$ has equal norm Galois embeddings (i.e. all Galois embeddings, real or complex, have the same norm\footnote{For example, rational integer multiples of roots of unity satisfy this property.}), then missing digits subsets of  $C_{b,D_b}$ are self-similar and the thickness method applies. We state the following result and omit its proof because it is similar to what we have proved with minor changes.

\begin{theorem}
     Let $\B{K}$ be a number field so that $O_{\B{K}}$ is an Euclidean domain. For each $k\geq 2,$ there is a number $M>0$ such that for each $k$ different integers in $O_{\B{K}}$, say, $b_1,\dots,b_k$ with equal norm Galois embeddings, and with norm at least $M$, there are infinitely many integers $z\in O_{\B{K}}$ with the property that $z$ misses the digit zero in base $b_j$ for all $j$. 
\end{theorem}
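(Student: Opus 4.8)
The plan is to mimic the $\B{Q}[i]$ argument, replacing the single complex archimedean place by the full family of Galois embeddings. First I would fix the Minkowski embedding $\iota\colon\B{K}\hookrightarrow\R^{s}\times\C^{t}\cong\R^{d}$ with $d=s+2t$, under which $O_{\B{K}}$ is a full-rank lattice. For a non-unit $b\in O_{\B{K}}$ with equal norm Galois embeddings, every embedding scales by the \emph{same} factor $N(b)^{1/d}$ and acts as an isometry composed with that scaling; this is exactly why the missing-digit sets $\mathcal{C}_{b,D}$ are self-similar with uniform contraction ratio $N(b)^{-1/d}$, rather than self-affine. I would then choose, for each $b$, the digit set $D_{b}$ to be the lattice points of $O_{\B{K}}$ in a fundamental domain of $bO_{\B{K}}$ that is Hausdorff-close to a cube in $\R^{d}$ of side $N(b)^{1/d}$ (possible once $N(b)$ is large), so that $\mathcal{C}_{b,D_{b}}$ lies within $O(N(b)^{-1/d})$ of a unit cube.

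Second, I would establish the thickness bound just as in the stated theorem on one-missing-digit sets: deleting one digit from $D_{b}$ perturbs the convex hull by $O(N(b)^{-1/d})$ and the resulting set is self-similar with ratio $N(b)^{-1/d}$, so $\tau(\mathcal{C}_{b,D})\geq c_{d}N(b)^{1/d}$ for a dimensional constant $c_{d}>0$, and the same bound passes to every rescaled copy $\mathcal{C}_{b,D,j}$. Next comes the logarithmic/equidistribution step: using the map $a\mapsto(\log|a|_{v})_{v}$ on the relevant part of the unit group, together with the fact that $1\in\overline{\{z^{q}:q\in\N\}}$ for any $z$ in the relevant compact subgroup, one produces infinitely many exponent tuples $(l_{1},\dots,l_{k})$ for which the convex hulls of $b_{1}^{l_{1}}\mathcal{C}_{b_{1},D_{1}},\dots,b_{k}^{l_{k}}\mathcal{C}_{b_{k},D_{k}}$ are all Hausdorff-close, relative to their common diameter, to one and the same large cube. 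This verifies conditions (1) and (2) of Theorem~\ref{thm: FY}, and since all thicknesses are at least $c_{d}N(b_{j})^{1/d}\geq c_{d}M^{1/d}$, condition (3) holds once $M=M(k)$ is large, the governing inequality for $M$ being the $d$-dimensional analogue of the one in Theorem~\ref{thm: main1} with $K_{2}=\big((24\sqrt{d})^{d}(1+4^{d}2)/(1-2^{-d})\big)^{2}$. Theorem~\ref{thm: FY} then furnishes a point (in fact a set of positive Hausdorff dimension) in $\bigcap_{j}A_{b_{j},D_{j}}$ of arbitrarily large norm.

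Third, I would handle the two nuisances peculiar to a general field. The ``integer part'' of such a $z$ is base-dependent, because the fundamental domains $\mathcal{C}_{b_{j},D_{b_{j}}}$ differ; truncating $z$ in base $b_{j}$ yields integers $z_{j}$ that need not agree, but the carries involved lie in a fixed finite set, so $z_{j}-z_{j'}$ ranges over a finite set $B=B(\B{K},b_{1},\dots,b_{k})$ independent of $z$. To upgrade ``$z$ has digits in $D_{j}$, up to a bounded shift'' to ``$z$ omits the digit $0$'', I would choose each $D_{j}\subset D_{b_{j}}$ by deleting all digits within $\varepsilon|b_{j}|$ of the corners of the cube-like fundamental domain; for $\varepsilon$ small this keeps $\tau(\mathcal{C}_{b_{j},D_{j}})$ large (it still tends to $\infty$ as $N(b_{j})\to\infty$), while guaranteeing that if $z$ has all digits in $D_{j}$ then $z+s$ has no zero digit in base $b_{j}$ for every $s\in B$. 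Absorbing the finitely many shifts yields infinitely many genuine integers $z\in O_{\B{K}}$ simultaneously omitting $0$ in all bases $b_{1},\dots,b_{k}$; deleting the digit $0$ itself (rather than the corners) from each $D_{b_{j}}$ when building $D_{j}$ gives the ``one arbitrarily fixed digit'' version.

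The main obstacle is the combined equidistribution and fundamental-domain bookkeeping: one must at once (a) pick the fundamental domains of $b_{j}O_{\B{K}}$ to be uniformly cube-like, (b) pick exponents $l_{j}$ making the rescaled convex hulls sit inside a common cube with controlled overlap, and (c) ensure this common cube is large enough that the excised corner digits are a negligible fraction. Over $\B{Q}$ and $\B{Q}[i]$ these normalizations were automatic or nearly so; over a general $\B{K}$ with several archimedean places one must control the argument in each complex place and the sign ambiguity in each real place simultaneously, and it is precisely the equal-norm-Galois-embeddings hypothesis that makes this possible while forcing the self-similarity (in place of self-affinity) that the thickness machinery requires. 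Once these geometric normalizations are in place, the remainder is the routine transcription of the proof of Theorem~\ref{thm: main1} into $\R^{d}$.
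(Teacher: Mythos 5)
Your proposal is correct and follows essentially the same route as the paper: the paper proves the $\B{Q}[i]$ case in full (cube-like fundamental domain and digit set $D_b$, self-similarity with contraction ratio $N(b)^{-1/2}$, thickness $\gtrsim N(b)^{1/2}$, the logarithmic map and the $1\in\overline{\{z^q\}}$ argument to align convex hulls, Theorem~\ref{thm: FY}, the finite shift set $B$ from base-dependent integer parts, and the corner-digit deletion trick), and then declares the general equal-norm-Galois-embeddings case to follow ``with minor changes.'' Your write-up simply makes those minor changes explicit — Minkowski embedding into $\R^s\times\C^t$, contraction ratio $N(b)^{-1/d}$, the per-place log/argument bookkeeping, and the $d$-dimensional constant $K_2$ — which is precisely the content the paper leaves to the reader.
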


Finally, with only minor modifications in the above proof, the missing digit can be chosen to be any single digit individually in each base. The final version of the result in this section is given as follows.
\begin{theorem}
     Let $\B{K}$ be a number field so that $O_{\B{K}}$ is an Euclidean domain. For each $k\geq 2,$ there is a number $M>0$ such that for each $k$ different integers in $O_{\B{K}}$, say, $b_1,\dots,b_k$ with equal norm Galois embeddings, and with norm at least $M$, there are infinitely many integers $z\in O_{\B{K}}$ with the property that $z$ misses one arbitrarily fixed digit in base $b_j$ for all $j$. 
\end{theorem}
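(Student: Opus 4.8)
The plan is to transplant the scheme of the proofs of Theorems~\ref{thm: main1} and~\ref{thm: main2} from the real line to the completion $\B{K}_\infty\cong\R^d$, $d=[\B{K}:\B{Q}]$, with $O_{\B{K}}$ realised as a lattice via the Galois embeddings and with Theorem~\ref{thm: FY} playing the part of the one-dimensional gap lemma. Fix once and for all a fundamental parallelepiped $\Omega_0$ of the lattice $O_{\B{K}}$ having $0$ as a vertex, and for a non-unit $b\in O_{\B{K}}$ put $D_b:=O_{\B{K}}\cap b\Omega_0$, a complete set of coset representatives of $O_{\B{K}}/bO_{\B{K}}$; one checks that $\mathcal{C}_{b,D_b}$ lies within Hausdorff distance $O(N(b)^{-1/d})$ of $\Omega_0$, uniformly in $b$, because the rotational part of multiplication by $b^{-1}$ is cancelled by the corresponding rotation of $b\Omega_0$. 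The equal-norm hypothesis on the $b_j$ enters precisely here: multiplication by such a $b$ acts on $\R^d$ as a similarity of ratio $N(b)^{1/d}$, so every $\mathcal{C}_{b,D}$ with $D\subseteq D_b$ is genuinely self-similar, not merely self-affine, and dilating it by $b^m$ does not change its thickness. Write $B_0:=(\Omega_0-\Omega_0)\cap O_{\B{K}}$, a fixed finite set.

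For each prescribed digit $\delta_j\in D_{b_j}$ I would take $D_j\subseteq D_{b_j}$ to be $D_{b_j}$ with two families of digits deleted: all digits within $\varepsilon_0 N(b_j)^{1/d}$ of $\partial(b_j\Omega_0)$ --- in particular the digit $0$, since $0$ is a corner of $b_j\Omega_0$ --- together with a further $O(1)$ digits (depending only on $\B{K}$ and $\delta_j$) chosen, exactly as in the treatment of the missing digit zero above, so that adding any element of $B_0$ to an integer all of whose base-$b_j$ digits lie in $D_j$ cannot produce the digit $\delta_j$. Here $\varepsilon_0$ is a suitably small constant. Arguing as in the thickness estimate for $\mathbb{Z}[i]$ above, now deleting a shell near each of the $2^d$ corners of the parallelepiped, one checks that the Newhouse gaps of $\mathcal{C}_{b_j,D_j}$, ordered by size, always keep a companion bridge of comparable size, so $\tau(\mathcal{C}_{b_j,D_j})\gtrsim_{d}\varepsilon_0^{-1}$, which exceeds any prescribed threshold once $\varepsilon_0$ is small; the extra $O(1)$ interior deletions cost nothing. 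Two further features of this choice will be used: since $0\notin D_j$, every dilate $b_j^{m}\mathcal{C}_{b_j,D_j}\subseteq A_{b_j,D_j}$ is bounded away from the origin by $\gtrsim\varepsilon_0 N(b_j)^{m/d}$; and $\mathrm{conv}(\mathcal{C}_{b_j,D_j})$ is, up to Hausdorff distance $O(N(b_j)^{-1/d})$, the $b_j$-independent ``corner-cut'' parallelepiped $\Omega_0'$.

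The Diophantine step refines the torus-rotation argument in the proof of Theorem~\ref{thm: main1}, since here rotations must be controlled in addition to moduli. I would consider the multiplicative sub-semigroup
\[
S:=\bigl\{(b_1^{l_1}/b_2^{l_2},\dots,b_1^{l_1}/b_k^{l_k}):l_1,\dots,l_k\in\Z_{\geq 1}\bigr\}\subseteq(\B{K}_\infty^{\times})^{k-1}.
\]
Recurrence to $0$ of the orbit of the translation by $(\log N(b_1)/\log N(b_2),\dots,\log N(b_1)/\log N(b_k))$ on $\B{T}^{k-1}$ makes every Galois absolute value of every coordinate of a suitable element of $S$ as close to $1$ as desired, so $\overline{S}$ meets the compact subgroup $\mathbb{U}\subseteq(\B{K}_\infty^{\times})^{k-1}$ of tuples all of whose Galois absolute values equal $1$; as $\overline{S}\cap\mathbb{U}$ is a nonempty closed sub-semigroup of the compact group $\mathbb{U}$, it is a subgroup, so $(1,\dots,1)\in\overline{S}$, and raising a near-identity element of $S$ to high powers gives exponents $l_1,\dots,l_k\to\infty$ with $b_j^{l_j}/b_1^{l_1}\to 1$ in $\B{K}_\infty$ for every $j$. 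For such exponents the convex hulls of the $b_j^{l_j}\mathcal{C}_{b_j,D_j}$ all lie within Hausdorff distance $o(N(b_1)^{l_1/d})$ of the single set $b_1^{l_1}\Omega_0'$, hence contain a common ball of radius $\gtrsim N(b_1)^{l_1/d}$ while their diameters are $\lesssim N(b_1)^{l_1/d}$; thus the quantity $\beta$ of Theorem~\ref{thm: FY} is bounded below by a positive constant independent of the $b_j$. Since $k$ is fixed and each $\tau(b_j^{l_j}\mathcal{C}_{b_j,D_j})=\tau(\mathcal{C}_{b_j,D_j})$ can be made as large as we please, condition (3) of Theorem~\ref{thm: FY} holds for a suitable $c\in(0,d)$, and the theorem produces a point $w\in\bigcap_{j=1}^{k}b_j^{l_j}\mathcal{C}_{b_j,D_j}\subseteq\bigcap_{j=1}^{k}A_{b_j,D_j}$ with $\|w\|\gtrsim\varepsilon_0 N(b_1)^{l_1/d}$, arbitrarily large along the Diophantine sequence. (If desired, each $b_j^{l_j}\mathcal{C}_{b_j,D_j}$ may first be trimmed on the far side at a largest gap, exactly as in Theorem~\ref{thm: main1}, to align the convex hulls precisely; no thickness is lost because $0\notin D_j$.)

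Finally one passes from $w$ to an integer. Euclideanness of $O_{\B{K}}$ gives $w=\sum_{i\leq m_j}c^{(j)}_i b_j^i$ with all $c^{(j)}_i\in D_j$, so the base-$b_j$ integer part $z_j:=\sum_{0\leq i\leq m_j}c^{(j)}_i b_j^i\in O_{\B{K}}$ has all its base-$b_j$ digits in $D_j$, hence avoids $\delta_j$. The fractional parts $w-z_j$ lie in $\mathcal{C}_{b_j,D_{b_j}}$, hence within $O(N(b_j)^{-1/d})$ of $\Omega_0$, so for $M$ large each difference $z_1-z_j=(w-z_j)-(w-z_1)$ lies in $B_0$. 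Set $z:=z_1$: its base-$b_1$ expansion avoids $\delta_1$, and for $j\geq 2$ we have $z=z_j-(z_j-z_1)$ with $z_j-z_1\in B_0$, so by the choice of $D_j$ the base-$b_j$ expansion of $z$ omits $\delta_j$ as well. Letting $\|w\|\to\infty$ along the Diophantine sequence yields infinitely many such $z$, which proves the theorem. The step I expect to be the crux is the higher-dimensional thickness estimate together with the geometric bookkeeping feeding Theorem~\ref{thm: FY}: the deletions must be arranged so that $\mathcal{C}_{b_j,D_j}$ is at once thick --- delicate in $\R^d$, where removing a full boundary shell can fuse the first-level gaps into one gap touching the unbounded component --- bounded away from the origin (to force $\|w\|$ large), and, after dilation, contained in a convex hull of $b_j$-independent shape (so that the common-ball hypothesis holds uniformly); the equal-norm hypothesis is exactly what makes this feasible, since otherwise the relevant sets are self-affine with small thickness and Theorem~\ref{thm: FY} does not apply, whereas the remaining steps are routine once these objects are in place.
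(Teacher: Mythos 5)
Your proof is correct and follows the same basic strategy the paper outlines for Section~4: realise $O_{\B{K}}$ as a lattice in $\B{K}_\infty\cong\R^d$, note that equal-norm Galois embeddings make multiplication by $b_j$ a similarity so that the missing-digit fractals $\mathcal{C}_{b_j,D_j}$ are self-similar and thick, align scales and orientations via a Diophantine recurrence argument so that Theorem~\ref{thm: FY} applies, and then handle the base-dependent notion of ``integer part'' by deleting, ahead of time, a boundary shell of digits together with finitely many interior digits so that adding any element of the fixed finite difference set cannot create the forbidden digit. The paper explicitly ``omits its proof because it is similar to what we have proved with minor changes,'' and your writeup actually supplies several details the paper leaves implicit and slightly streamlines others: the choice $D_b=O_{\B{K}}\cap b\Omega_0$ forces $\mathcal{C}_{b,D_b}$ to lie near the fixed parallelepiped $\Omega_0$ rather than near a $b$-dependent rotated square as in the paper's $\B{Z}[i]$ discussion, which simplifies the geometric bookkeeping for the common-ball hypothesis; and the Diophantine step is packaged via the standard fact that a nonempty closed subsemigroup of a compact group is a subgroup, which cleanly replaces the paper's more ad hoc ``powers of $z\in\overline{P}\cap S^1$ accumulate at $1$'' argument while also making clear why the approximating exponents can be taken unbounded. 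One small caution that you already flag as the crux: the higher-dimensional thickness lower bound $\tau(\mathcal{C}_{b_j,D_j})\gtrsim\varepsilon_0^{-1}$ after removing a boundary shell requires a genuine verification that the deleted shell does not merge the first-level gaps into the unbounded component; this is the analogue of the paper's stated but unproved estimate $\tau(C_{b,D})\geq c_d N(b)^{1/2}$ for a single missing digit, and in both cases the claim is plausible but needs the explicit Newhouse-gap computation in $\R^d$ that neither text writes out.
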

It is possible to generalise the above result, allowing more than one missing digit. The argument will be significantly more complicated because the topological thickness depends on the choice of digits in a dramatic manner. We thus stop obtaining further results.

\end{document}